\theoremstyle{definition} \newtheorem{definition}{Definition}[section]
\theoremstyle{definition} \newtheorem{remark}[definition]{Remark}
\theoremstyle{plain} \newtheorem{lemma}[definition]{Lemma}
\theoremstyle{plain} \newtheorem{proposition}[definition]{Proposition}
\theoremstyle{plain} \newtheorem{theorem}[definition]{Theorem}
\theoremstyle{plain} 
\theoremstyle{definition} \newtheorem{example}[definition]{Example}
\theoremstyle{definition}
\DeclareMathOperator{\BV}{BV}
\newcommand{\R}{\mathbb{R}}
\newcommand{\N}{\mathbb{N}}
\newcommand{\T}{\mathbb{T}}
\newcommand{\TV}{\text{\rm Tot.Var.}}
\renewcommand{\L}{\mathscr L}
\renewcommand{\L}{\mathcal L}
\numberwithin{equation}{section}
\def\XXint#1#2#3{{\setbox0=\hbox{$#1{#2#3}{\int}$ }
		\vcenter{\hbox{$#2#3$ }}\kern-.6\wd0}}
\title{An example of a weakly mixing $\BV$ vector field which is not strongly mixing}
\author{Martina Zizza\footnote{S.I.S.S.A., via Bonomea 265, 34136 Trieste, Italy. E-mail: mzizza@sissa.it}}
\date{\today}
\begin{document}

\maketitle
\begin{abstract}
   
   \noindent We give an example of a weakly mixing vector field $b\in L^\infty([0,1],\BV(\T^2))$ which is not strongly mixing, in the setting first introduced in \cite{Bianchini_Zizza_residuality}. The example is based on a work of Chacon \cite{Weaklymixingnotstrongly} who constructed a weakly mixing automorphism which is not strongly mixing on $([0,1],\mathcal{B}([0,1]),|\cdot|)$, where $\mathcal{B}([0,1])$ are the Borel subsets of $[0,1]$ and $|\cdot|$ is the one-dimensional Lebesgue measure. 
\end{abstract}
\noindent Key words: weak mixing, automorphisms, divergence-free vector fields, Regular Lagrangian Flows. \\

\noindent MSC2020: 35Q35, 37A25. \\

\noindent \begin{center} 14/2022/MATE \end{center}
\tableofcontents
\section{Introduction}
We consider the measure space $(\T^2,\mathcal{B}(\T^2),|\cdot|)$ where the torus $\T^2$ is the unit square $[0,1]^2$ with periodic boundary conditions, $\mathcal{B}(\T^2)$ denotes the Borel subsets of $\T^2$ and $|\cdot|$ represents the normalized Lebesgue measure.  We consider the group of automorphisms on the torus, that is
\begin{equation*}
    G(\T^2)=\lbrace T:\T^2\rightarrow \T^2 \text{ invertible and measure-preserving}\rbrace,
\end{equation*}
made of all measurable and invertible maps that preserve the Lebesgue measure, in symbols:
\begin{equation*}
    |T^{-1}(A)|=|A|,\quad\forall A\in\mathcal{B}(\T^2).
\end{equation*}
Among automorphisms, we look for those that \emph{mix} sets, that is
\begin{itemize}
    \item $T$ is \textbf{\emph{weakly mixing}} if $\forall A,B\in\mathcal{B}(\mathbb{T}^2)$
        \begin{equation*}
            \lim_{n\to\infty}\frac{1}{n}\sum_{j=0}^{n-1}\left[|T^{-j}(A)\cap B|-|A||B|\right]^2=0;
        \end{equation*}
        \item $T$ is \textbf{\emph{(strongly) mixing}} if $\forall A,B\in\mathcal{B}(\mathbb{T}^2)$
        \begin{equation*}
            \lim_{n\to\infty}|T^{-n}(A)\cap B|=|A||B|.
        \end{equation*}
\end{itemize}
One can easily observe that the strong mixing implies the weak mixing, while there are counterexamples of weakly mixing automorphisms that are not strongly mixing (see for instance \cite{Weaklymixingnotstrongly}). \\

\noindent These objects are well understood from the point of view of Ergodic Theory, whereas their counterpart in the dynamics of incompressible fluids is a recent research topic that presents many challenging open questions. In Fluid Dynamics one focuses mostly on (divergence-free) vector fields $b=b(t,x)$ with $t\in[0,1]$, $x\in\T^2$, whose flow $X(t)$, that is the solution of the following ODE system
\begin{equation*}
    \begin{cases}
    \dot{X}(t,x)=b(t,X(t,x)),\\
    X(0,x)=x,
    \end{cases}
\end{equation*}
is an automorphism of $G(\T^2)$ when evaluated at time $t=1$. Indeed, thanks to the incompressibility condition ($\text{div } b=0$), at any instant of time the flow $X(t)$ can be regarded as an invertible and measure-preserving map from $\T^2$ into itself.
The theory of DiPerna and Lions \cite{DiPerna:Lions} and Ambrosio \cite{Ambrosio:BV} establishes existence,  uniqueness and stability for the (Regular Lagrangian) flow of divergence-free vector fields living in $L^\infty([0,1],W^{1,1}_{\text{loc}}(\T^2))$ or $L^\infty([0,1],\BV_\text{loc}(\T^2))$, giving the possibility to exploit on the one side the rough regularity in space of the vector field, on the other side to establish deep connections with Ergodic Theory. To be more precise,
\begin{definition}
    Let $b\in L^1([0,1]\times \T^2; \T^2)$. A map $X:[0,1]\times \T^2\rightarrow \T^2$ is a \emph{Regular Lagrangian Flow} (RLF) for the vector field $b$ if
    \begin{enumerate}
        \item for a.e. $x\in \T^2$ the map $t\rightarrow X(t,x)$ is an absolutely continuous integral solution of
        \begin{equation*}
            \begin{cases}
    \frac{d}{dt} x(t)=b(t, x(t)); & \\
    x(0)=x.
    \end{cases}
        \end{equation*}
        \item there exists a positive constant $C$ independent of $t$ such that
        \begin{equation*}
            |X(t)^{-1}(A)|\leq C|A|,\quad\forall A\in\mathcal{B}(\T^2).
        \end{equation*}
    \end{enumerate}
\end{definition}
\noindent The above discussion motivates the following
\begin{definition}
      Let $b\in L^\infty([0,1],\BV_\text{loc}(\T^2))$ be a divergence-free vector field. We say that $b$ is weakly mixing  (resp. strongly mixing) if its unique RLF $X(t)$, when evaluated at $t=1$, is a weakly mixing automorphism (resp. strongly mixing). 
  \end{definition}  
\noindent The definition of mixing vector fields was first given in \cite{Bianchini_Zizza_residuality}, but there are examples of strongly mixing vector fields in previous literature. For instance, in  \cite{univ:mixer} the authors give an explicit example of a strongly mixing vector field $b\in L^\infty([0,1],W^{s,p}(\T^d))$ for $s<\frac{1+\sqrt 5}{2}$ and $p\in[1,\frac{1}{2s+1-\sqrt 5})$ whose RLF at time $t=1$ is the \emph{Folded Baker's map}.
 We should remark that the advantage of the rough regularity of the vector field, especially in the case of the $\BV$ regularity, is that it allows for rigid \emph{cut and paste} motions, since the flows originated by these vector fields do not preserve the property of a set to be connected. These constructions would be hard to reproduce for vector fields with higher regularity in time and space. Nevertheless, a stochastic approach (see for example \cite{Almostsure}) investigates mixing vector fields with higher regularity, but does not furnish  deterministic examples of vector fields with the desired mixing properties.\\

\noindent The aim of this paper is the construction of a vector field $b\in L^\infty([0,1],\BV(\T^2))$, which is weakly mixing but not strongly mixing. As proved in \cite{Halmos:weak:mix} for automorphisms and in \cite{Bianchini_Zizza_residuality} for vector fields, the weakly mixing behaviour is \emph{typical}, while strongly mixing vector fields are \emph{few} in the sense of Baire Category Theorem (see also \cite{Alpern},\cite{Halmos:ergodic},\cite{Halmos:lectures},\cite{Oxtoby},\cite{Rokhlin},\cite{Weiss}). Nevertheless it is hard to give examples of weakly mixing automorphisms/vector fields that are not strongly mixing.
As written above, Zlato\v{s} and Elgindi construct a strongly mixing vector field in \cite{univ:mixer}, while there are no examples in literature (up to our knowledge) of vector fields that are weakly mixing but not strongly mixing. We want especially to underline that the parallelism with Ergodic Theory, which has investigated different stages of the mixing behaviour (see for instance \emph{mild mixing}, \emph{light mixing}, \emph{partial mixing}), can give new insights from the point of view of the dynamics of an incompressible fluid. \\

\noindent Our examples is based on a work of Chacon \cite{Weaklymixingnotstrongly}, who constructed a weakly mixing automorphism that is not strongly mixing, in the one-dimensional space $([0,1],\mathcal{B}([0,1]),|\cdot|)$, where $|\cdot|$ is the one-dimensional Lebesgue measure. The importance of his work is that a general procedure to build up weakly mixing automorphisms that are not strongly mixing is given. Indeed his example easily extends to multiple dimensions, but we will focus on the dimension $d=2$ to avoid technicalities and to provide some \emph{visual} expressions of the vector fields under consideration. In particular we will construct a divergence-free vector field $b\in L^\infty([0,1],\BV(\T^2))$ whose Regular Lagrangian Flow $X(t)$, when evaluated at time $t=1$, is a weakly mixing automorphism that is not strongly mixing. \\

\noindent The idea of the paper is the definition of a setting (see \emph{Configurations} in Section \ref{subS_rows_columns}) that helps to relate an automorphism, call it $U$, to the vector field $b^U$ whose RLF at time $t=1$ is $U$. \\

\noindent A \emph{configuration} $\gamma$ is a $n\times n$ matrix that takes values in the set $\lbrace 1,2,\dots,n^2\rbrace $. Moreover we ask that $\gamma_{ij}=\gamma_{hk}$ iff $i=h,j=k$. Every configuration indeed represents an enumeration of the subsquares of the torus of the grid $\N\times\N\frac{1}{n}$ (see Figures \ref{fig:torus:tass},\ref{fig:torus:tass:2}). A \emph{movement} $T:\mathcal{C}(n)\rightarrow\mathcal{C}(n)$ is a one-to-one map from the space of configurations $\mathcal{C}(n)$ into itself, that is, it is a \emph{permutations} of the subsquares of the grid $\N\times\N\frac{1}{n}$. Among movements the following are relevant for Chacon's example: simple exchange, sort, rotation (Section \ref{subS_rows_columns}), and each one of them can be realized as the flow, evaluated at time $t=1$, of some $\BV$ divergence-free vector field (see Subsection \ref{Ss:vf} and Figure \ref{fig:torus:tass:3}). \\

\begin{figure}
    \centering
\includegraphics[scale=0.5]{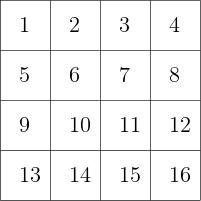}
    \caption{The grid on the torus with $k=4$ and an enumeration of subsquares.}
    \label{fig:torus:tass}
\end{figure}
\begin{figure}
    \centering
\includegraphics[scale=0.5]{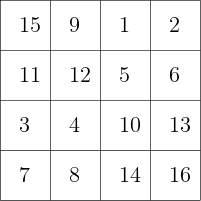}
    \caption{The automorphism $U_4$ sends the starting configuration into this final one.}
    \label{fig:torus:tass:2}
\end{figure}
\begin{figure}
    \centering
\includegraphics[scale=0.5]{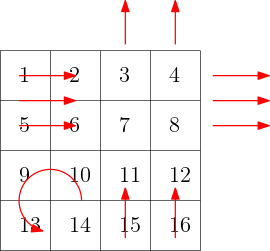}
    \caption{Vector fields on the torus that move rigidly the subsquares.}
    \label{fig:torus:tass:3}
\end{figure}
\noindent 
The key example that one has to keep in mind while approaching this problem is the \emph{15 puzzle} where one performs rigid movements on subsquares in order to reach the desired configuration (see Figure \ref{fig:15puzzle}). \\

\begin{figure}
    \centering \includegraphics[scale=0.5]{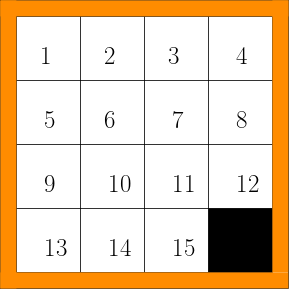}
    \caption{The 15 puzzle.}
    \label{fig:15puzzle}
\end{figure}

\noindent Chacon's method provides a sequence of automorphisms $\lbrace U_k\rbrace$ that are permutations of the subsquares of the grid $\N\times\N\frac{1}{k}$. To give an intuition, we fix for example $k=4$ and we assume to enumerate squares as in Figure \ref{fig:torus:tass}. Chacon's automorphism $U_4\in G(\T^2)$  moves the previous enumeration of squares (the starting configuration) into the final configuration in Figure \ref{fig:torus:tass:2}. Then he considers the limit map $U=\lim_{k\to\infty}U_k$, which is a weakly mixing automorphism that is not strongly mixing (see Sections \ref{Ss:chacon}, \ref{S:twod}). \\

\noindent Using Chacon's construction and the setting of configurations, we prove in particular the following result (Section \ref{S:wmvf}):
\begin{theorem}
There exists a divergence-free vector field $b^U\in L^\infty_t\BV_x$ whose RLF $X^U(t)$ when evaluated at time $t=1$ is the Chacon's map $U$, that is $b^U$ is a weakly mixing vector field that is not strongly mixing.
\end{theorem}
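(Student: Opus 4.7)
The plan is to realize Chacon's automorphism $U$ as the time-$1$ evaluation of the RLF of a single divergence-free vector field $b^U\in L^\infty([0,1],\BV(\T^2))$, built by concatenating, on a countable family of shrinking time intervals, vector fields associated with the approximating permutations $\{U_k\}$ of Chacon's construction. The basic building blocks come from the configurations/movements framework of Section~\ref{subS_rows_columns} and Subsection~\ref{Ss:vf}: since each $U_k$ is a permutation of the subsquares of a grid of mesh $1/n_k$, it decomposes into finitely many elementary movements (simple exchanges, sorts, rotations), and each such movement is realizable as the time-$1$ flow of an explicit divergence-free $\BV$ vector field with controllable $L^\infty$ and spatial $\BV$ norms.

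I would then partition $[0,1]$ into consecutive intervals $\{I_k\}_{k\geq 1}$ whose union covers $[0,1]$ up to a null set, and define $b^U$ on $I_k$ as a time-rescaled concatenation of the vector fields that realize the transition from $U_{k-1}$ to $U_k$. Since the displacements required at level $k$ are of order $1/n_k$, choosing $|I_k|$ proportional to $1/n_k$ (times the number of movements at that level) keeps the rescaled velocity bounded in $L^\infty_t L^\infty_x$ by an absolute constant. By the chain rule for RLFs, the flow of $b^U$ at the cumulative time $t_k:=\sum_{j\leq k}|I_j|$ then coincides with $U_k$, viewed as an automorphism on the common refinement of all grids.

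To identify the time-$1$ flow, I would invoke the stability of RLFs for divergence-free $\BV$ vector fields provided by the DiPerna--Lions--Ambrosio theory, together with the convergence $U_k\to U$ established in Section~\ref{S:twod}. Passing to the limit $t_k\to 1^-$ yields $X^U(1,\cdot)=U$, and the weakly-mixing-but-not-strongly-mixing behavior of $U$ then transfers to $b^U$ directly through the definition given in the introduction.

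The main obstacle will be the uniform control of the spatial $\BV$ norm of $b^U$. At level $k$ the relevant grid has $n_k^2$ cells, and the interfaces appearing in a naive realization of the transition $U_{k-1}\to U_k$ have total length of order $n_k$. For the resulting vector field to lie in $L^\infty([0,1],\BV(\T^2))$, this growth must be absorbed by an appropriate decomposition into elementary movements that share long contiguous interfaces (favoring sorts of strips over many isolated simple exchanges), combined with a precise choice of how fast $n_k$ grows and how fast $|I_k|$ decays. This combinatorial-geometric bookkeeping of interfaces is, to my mind, the technical heart of the argument.
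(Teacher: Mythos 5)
Your overall strategy --- concatenating time-rescaled flows of elementary movements on a countable family of shrinking intervals and reading off $U$ at $t=1$ --- is the same as the paper's, but the two points you leave open, or flag as ``the technical heart,'' are exactly where the paper's argument has specific content that your proposal is missing. First, the $\BV$ bound is not obtained by any combinatorial bookkeeping of interfaces of total length $n_k$: Proposition \ref{prop:che:palle} shows that the increment $V_n=U_{n-1}^{-1}\circ U_n$ is supported on just \emph{two adjacent subsquares} $Q_{n,h_n}\cup R_{n-1}$ of side $2^{-n+1}$ and factors into a \emph{fixed} number (seven) of elementary movements on the refined grid, so the associated field $b^n$ satisfies $\TV(b^n)(\T^2)\leq C_2\,2^{-2n}$ as in \eqref{eq:second:estimate}; after rescaling time by $2^n$ on the interval $[2^{-n},2^{-n+1}]$ this still gives $\TV(b^U(t))(\T^2)\leq C_2\,2^{-n}$, uniformly bounded. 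Without this locality of the increment --- which is a structural feature of Chacon's construction, not something to be engineered by ``favoring sorts of strips'' --- your worry about interfaces of total length $n_k$ would indeed be fatal; with it, the estimate is immediate.

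Second, your time ordering is reversed with respect to the paper's, and this is a genuine obstruction rather than a cosmetic choice. You require the flow to equal $U_k$ at the increasing times $t_k\uparrow 1$, i.e.\ you realize the transitions in the order $U_1,U_2,\dots$. Since the map at a later time is the later flow composed on the \emph{left} of the earlier one, the field on $I_k$ would then have to realize the conjugate $U_{k-1}\circ V_k\circ U_{k-1}^{-1}$, not $V_k$ itself; by \eqref{obs} and \eqref{obs:2} this conjugate permutes subsquares contained in $U_{k-1}(Q_{k,h_k})\cup R_{k-1}=Q_{k,1}\cup R_{k-1}$, which are \emph{not} adjacent, so the seven-movement decomposition and the locality underlying the estimates are destroyed. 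The paper resolves this by running the corrections backwards in time: the level-$n$ building block acts on $[2^{-n},2^{-n+1}]$, so that $X^U(1)=X^1(1)\circ X^2(1)\circ\dots=U_1\circ V_2\circ V_3\circ\dots=\lim_n U_n=U$ with every factor acting on adjacent squares, and the identification of $X^U(1)$ with $U$ follows from the algebraic identity \eqref{eq:recursive:map} together with $X^U(2^{-n})\to\mathrm{id}$, with no appeal to the DiPerna--Lions--Ambrosio stability theorem.
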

\noindent We conclude remarking that all these constructions are possible assuming to work in $[0,1]^d$ instead of $\mathbb{T}^d$. \\

\noindent \textbf{Plan of the paper.} After a short introduction on automorphisms in Ergodic Theory and on Regular Lagrangian Flows (Subsection \ref{Ss:RLF}), we introduce the Canonical Chacon's transformation in the one-dimensional space (Section \ref{Ss:chacon}), that will be a model for the extension to higher dimensions (see Section \ref{S:twod}). In Section \ref{subS_rows_columns} we introduce the notions of \emph{configurations} and \emph{movements}, and in Section \ref{Ss:vf} we describe how to generate vector fields acting on the torus as the movements previously defined, giving some estimates on their total variation. In Section \ref{S:twod} we describe the two dimensional automorphism $U$ in terms of movements, and finally in Section \ref{S:wmvf} we construct the RLF flow $X^U(t)$ with the property that $X^U(1)=U$.
\\

\noindent \textbf{Aknowledgments.} The author thanks Gianluca Crippa for having proposed the problem and Stefano Bianchini for many useful comments.

\section{Preliminaries}
For notations and preliminaries we refer mainly to \cite{Bianchini_Zizza_residuality}. Throughout the paper we will consider divergence-free vector fields $b:[0,1]\times \mathbb{T}^2\rightarrow \R^2$ in the space $L^\infty([0,1],\BV(\mathbb{T}^2))$ (or alternatively in $L^\infty([0,1],\BV(\R^2))$ with $\text{supp } b\subset\subset[0,1]^2$), in short $b\in L^\infty_t\BV_x$. The measure space under interest is $(\mathbb{T}^2,\mathcal{B}(\mathbb{T}^2),|\cdot|)$  where $|\cdot|$ is the two dimensional Lebesgue measure, and  $\mathbb{T}^2$ is the unit square $[0,1]^2$ with periodic boundary conditions. We remark that the analysis of the next sections can be easily extended to higher dimensions, but we will focus on the two dimensional case to avoid technical issues.
\subsection{Mixing automorphisms}
We call $G(\mathbb{T}^2)$  the group (w.r.t. the composition) of automorphisms of the torus, namely:
\begin{definition}
    An \emph{automorphism} of the measure space $(\mathbb{T}^2,\mathcal{B}(\mathbb{T}^2),|\cdot|)$ is an invertible map $T:\mathbb{T}^2\rightarrow\mathbb{T}^2$ measurable and measure-preserving, that is
    \begin{equation*}
        |A|=|T(A)|=|T^{-1}(A)|, \quad \forall A\in\mathcal{B}(\mathbb{T}^2).
    \end{equation*}
\end{definition}
\noindent From now on we will say that $T$ is an automorphism of the torus meaning that it is an automorphism of the measure space $(\mathbb{T}^2,\mathcal{B}(\mathbb{T}^2),|\cdot|)$. We remark that the definitions and propositions that we give in this section hold true for $(\Omega,\Sigma,\mu)$ where $\Omega$ is locally compact and separable, while $\mu$ is a complete and normalized measure on $\Omega$.
\begin{definition} Let $T\in G(\mathbb{T}^2)$ be an automorphism. Then
\begin{itemize}
\item $T$ is \textbf{ergodic} if $\forall A\in\mathcal{B}(\T^2)$ such that $T^{-1}(A)=A$ then $|A|=0$ or $1$;
        \item $T$ is \textbf{\emph{weakly mixing}} if $\forall A,B\in\mathcal{B}(\mathbb{T}^2)$
        \begin{equation}
        \label{weak:mix}
            \lim_{n\to\infty}\frac{1}{n}\sum_{j=0}^{n-1}\left[|T^{-j}(A)\cap B|-|A||B|\right]^2=0;
        \end{equation}
        \item $T$ is \textbf{\emph{(strongly) mixing}} if $\forall A,B\in\mathcal{B}(\mathbb{T}^2)$
        \begin{equation}
        \label{mix}
            \lim_{n\to\infty}|T^{-n}(A)\cap B|=|A||B|.
        \end{equation}
    \end{itemize}
    \end{definition}
\noindent
It is easy to see that strong mixing implies weak mixing which implies the ergodicity. Let now $T\in G(\mathbb{T}^2)$, then the Kolmogorov Koopman operator (see Chapter 1 in \cite{Ergodic:theory})  $U_T:L^2(\mathbb{T}^2)\rightarrow L^2(\mathbb{T}^2)$ is defined as
\begin{equation}\label{koopman_operator}
    U_T f(x)\doteq f(T(x)), \quad\forall f\in L^2(\mathbb{T}^2).
\end{equation}
    We observe that any operator $U_T$ of the form considered has eigenfunctions $f=\text{const}$ corresponding to the eigenvalue $1$. Then we have the following:
    \begin{theorem}[Mixing, Theorem 2 \cite{Ergodic:theory}]\label{mixing theorem}
    $T$ is weakly mixing iff $U_T$ has no eigenfunctions which are not constants.
    \end{theorem}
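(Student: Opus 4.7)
The plan is to establish the biconditional by reformulating weak mixing in $L^2$ and then invoking the spectral theorem for unitary operators. First I would record two preparatory facts. Since $T$ is invertible and measure-preserving, $U_T$ is a unitary operator on $L^2(\T^2)$ (so its eigenvalues lie in $S^1$). Also, the definition \eqref{weak:mix} of weak mixing extends by linearity and $L^2$-density from indicator functions to arbitrary $f,g\in L^2(\T^2)$, giving the equivalent condition
\[
\lim_{n\to\infty}\frac{1}{n}\sum_{j=0}^{n-1}\bigl|\langle U_T^j f, g\rangle - \langle f,1\rangle\langle 1, g\rangle\bigr|^2 = 0.
\]

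For the forward direction, assume $T$ is weakly mixing and $U_T f = \lambda f$ for a non-constant $f$. Unitarity gives $|\lambda|=1$. Write $f = c + f_0$ with $c = \int f$ and $\int f_0 = 0$; applying $U_T$ forces $c(\lambda-1)=0$, so either $\lambda=1$ or $c=0$. If $\lambda=1$, then $f$ is $T$-invariant, and since weak mixing implies ergodicity, $f$ must be constant, contradicting non-constancy. Hence $\lambda \neq 1$ and $\int f = 0$. Plug $g = f$ into the $L^2$ reformulation: $\langle U_T^j f, f\rangle = \lambda^j\|f\|_2^2$ while $\langle f, 1\rangle = 0$, so the Cesàro sum equals $\|f\|_2^4$ identically, forcing $f\equiv 0$, a contradiction.

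For the reverse direction, suppose $U_T$ has no non-constant eigenfunctions. Fix $f \in L^2(\T^2)$ with $\int f = 0$. By the spectral theorem for the unitary operator $U_T$ there exists a finite positive Borel measure $\sigma_f$ on $S^1$ with
\[
\langle U_T^j f, f\rangle = \int_{S^1} z^j\, d\sigma_f(z), \qquad j\in\Z.
\]
Using Fubini and $|\langle U_T^j f, f\rangle|^2 = \int\!\!\int (z\bar w)^j\, d\sigma_f(z)\,d\sigma_f(w)$, together with $\tfrac{1}{n}\sum_{j=0}^{n-1}(z\bar w)^j \to \mathbf{1}_{\{z=w\}}$, dominated convergence gives
\[
\lim_{n\to\infty}\frac{1}{n}\sum_{j=0}^{n-1}|\langle U_T^j f, f\rangle|^2 = (\sigma_f\times\sigma_f)(\{z=w\}) = \sum_{\zeta\in S^1}\sigma_f(\{\zeta\})^2.
\]
An atom of $\sigma_f$ at $\zeta$ corresponds to a nonzero projection of $f$ onto the eigenspace $\ker(U_T - \zeta I)$; the hypothesis rules out all such eigenspaces except the one-dimensional space of constants at $\zeta=1$, and $f\perp 1$ gives $\sigma_f(\{1\})=0$. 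Hence $\sigma_f$ is non-atomic and the limit vanishes. Polarization (applied to pairs $(f+g, f+g)$, etc.) or the Cauchy–Schwarz-type identity $|\langle U_T^j f, g\rangle|^2 \le \langle U_T^j f, f\rangle\langle U_T^j g, g\rangle$ propagates this from $f=g$ to arbitrary $f,g\in L^2$ with zero mean, which is the $L^2$ characterization of weak mixing.

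The main obstacle is the reverse direction, specifically the identification of atoms of the spectral measure $\sigma_f$ with eigenvalues of $U_T$ carrying nontrivial components of $f$. This requires the spectral theorem for unitary operators and the structure of the cyclic subspace generated by $f$: an atom $\sigma_f(\{\zeta\})>0$ yields, via the Borel functional calculus $\mathbf{1}_{\{\zeta\}}(U_T)f$, a genuine eigenvector of $U_T$ at eigenvalue $\zeta$, contradicting the hypothesis. The rest of the argument is a bookkeeping exercise with Cesàro averages of characters on $S^1$.
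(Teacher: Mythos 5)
The paper never proves this statement: it is imported wholesale as Theorem~2 of \cite{Ergodic:theory} and used as a black box in the proof of Proposition~\ref{prop:wm:not:sm}, so there is no internal argument to compare yours against. What you supply is the standard spectral proof of this classical equivalence, and its architecture is sound. The forward direction is correct: measure preservation forces $c(\lambda-1)=0$ for $c=\int f$, the case $\lambda=1$ is killed by ergodicity (which weak mixing implies), and the case $\lambda\neq 1$, $\int f=0$ is killed by the Ces\`aro computation with $g=f$, since each summand equals $\|f\|_2^4$. The reverse direction via Wiener's lemma, $\lim_n \frac1n\sum_{j=0}^{n-1}|\langle U_T^jf,f\rangle|^2=\sum_{\zeta}\sigma_f(\{\zeta\})^2$, together with the observation that an atom at $\zeta$ yields the nonzero eigenfunction $\mathbf{1}_{\{\zeta\}}(U_T)f$ while $f\perp 1$ forces $\sigma_f(\{1\})=\|P_{\ker(U_T-I)}f\|^2=0$, is exactly the classical route.

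Two points need repair, neither fatal. First, the ``Cauchy--Schwarz-type identity'' $|\langle U_T^jf,g\rangle|^2\le\langle U_T^jf,f\rangle\langle U_T^jg,g\rangle$ that you offer as an alternative to polarization is false: the right-hand side is in general a product of complex numbers, and even with moduli inserted the inequality fails (on $\T^2$ take $T(x_1,x_2)=(x_2,x_1)$, $f=e^{2\pi ix_1}$, $g=e^{2\pi ix_2}$, $j=1$: the left side is $1$, the right side is $0$). You must therefore rely on the polarization route you mention first: with $h$ ranging over $f\pm g$, $f\pm ig$ (all zero-mean), one has $|\langle U_T^jf,g\rangle|^2\le\frac14\sum_h|\langle U_T^jh,h\rangle|^2$, and the Ces\`aro average is controlled termwise; alternatively, apply Wiener's lemma to the complex spectral measure $\sigma_{f,g}$ and use $|\sigma_{f,g}(\{\zeta\})|\le\sigma_f(\{\zeta\})^{1/2}\sigma_g(\{\zeta\})^{1/2}$. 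Second, the equivalence between the set formulation \eqref{weak:mix} and the $L^2$ formulation, which your forward direction needs for a general eigenfunction $f$, is asserted ``by linearity and density'' but does require the standard $\varepsilon/3$ approximation argument using the uniform bound $|\langle U_T^jf,g\rangle-\langle f,1\rangle\langle 1,g\rangle|\le 2\|f\|_2\|g\|_2$; as written it is a citation-sized gap, not a one-line consequence.
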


\subsection{Mixing vector fields}\label{Ss:RLF}

We are interested in divergence-free vector fields $b$  whose flow $t\rightarrow X(t)\in C([0,1];G(\mathbb{T}^2))$ when evaluated at time $t=1$ is a weakly/strongly mixing automorphism.  When the velocity field $b:[0,1]\times\mathbb{T}^2\rightarrow\T^2$ is Lipschitz in space, uniformly in time, then its \emph{flow} is well-defined in the classical sense, indeed it is the map $X:[0,1]\times \mathbb{T}^2\rightarrow \mathbb{T}^2$ satisfying 
\begin{equation*}
    \begin{cases}
    \frac{d}{dt} X(t,x)=b(t, X(t,x)), & \\
    X_0(x)=x.
    \end{cases}
\end{equation*}
\noindent In the presence of discontinuities of the vector field (as the $\BV$ regularity in space) we can still give a notion of a flow known as the \emph{Regular Lagrangian Flow} associated with the vector field $b$.
\begin{definition}[Regular Lagrangian Flow]
    Let $b\in L^1([0,1]\times \T^2; \T^2)$. A map $X:[0,1]\times \T^2\rightarrow \T^2$ is a \emph{Regular Lagrangian Flow} (RLF) for the vector field $b$ if
    \begin{enumerate}
        \item for a.e. $x\in \T^2$ the map $t\rightarrow X(t)(x)$ is an absolutely continuous integral solution of
        \begin{equation*}
            \begin{cases}
    \frac{d}{dt} x(t)=b(t, x(t)), & \\
    x(0)=x.
    \end{cases}
        \end{equation*}
        \item there exists a positive constant $C$ independent of $t$ such that
        \begin{equation*}
            |X(t)^{-1}(A)|\leq C|A|,\quad\forall A\in\mathcal{B}(\T^2).
        \end{equation*}
    \end{enumerate}
\end{definition}
\noindent Existence, uniqueness and stability for Regular Lagrangian Flows were established in \cite{DiPerna:Lions} for the Sobolev case, and in \cite{Ambrosio:BV} for the $\BV$ regularity of the vector field. In particular, if $b\in L^\infty_t\BV_x$ is a divergence-free vector field (that is, the distributional divergence $D\cdot b(t)=0$ $\mathcal{L}^1$-a.e. $t$), then its unique RLF $t\rightarrow X(t)$ is a flow of automorphisms.
	\begin{definition}
  \label{def:mix:vect:fields}
      Let $b\in L^\infty([0,1],\BV_\text{loc}(\mathbb{T}^2))$ be a divergence-free vector field. We say that $b$ is weakly mixing (strongly mixing) if its unique RLF $X(t)$ when evaluated at $t=1$ is a weakly mixing (respectively strongly mixing) automorphism of $\mathbb{T}^2$. 
  \end{definition}
\noindent The above definition is motivated by the discussion in \cite{Bianchini_Zizza_residuality} (see page 13, Lemma 3.7). The main point of this work, as in \cite{Bianchini_Zizza_residuality},\cite{univ:mixer}, is the construction of a vector field whose RLF, when evaluated at time $t=1$, is an invertible and measure-preserving map which is weakly mixing but not strongly mixing. 
\noindent In the sequel we will build flows of measure-preserving maps originating from divergence-free vector fields; more precisely, if a flow $X:[0,1]\times \T^2\rightarrow \T^2$ is invertible, measure-preserving for $\mathcal{L}^1$-a.e. $t$ and the map $t\rightarrow X(t)$ is differentiable for $ \mathcal{L}^1$-a.e. $t$ and  $\dot{X}(t)\in L^1(\T^2)$, then the \emph{vector field associated with $X(t)$} is the divergence-free vector field defined by
	\begin{equation}
	\label{vect:field:ass}
	b(t)(x)=b(t,x)=\dot{X}(t)(X^{-1}(t,x)).
	\end{equation}	

\subsection{Canonical Chacon's Transformation}\label{Ss:chacon}
We present here the one dimensional canonical Chacon's Transformation \cite{Weaklymixingnotstrongly}, that can be easily extended to higher dimension (see for example Section \ref{S:twod} for the two dimensional): indeed, this construction is based on a general geometric approach which consists in mapping subintervals of the same length linearly onto each other (see  also \cite{Chen2015THENO} for further reference). Let us consider $I=[0,1]$ and let $|\cdot|$ be the Lebesgue measure. The aim is to construct a weakly mixing automorphism $T\in G(I)$ which is not strongly mixing. 
\begin{definition}
    A \emph{column} $C$ is a finite sequence of disjoint subintervals $J\subset I$ called \emph{levels}. The number of levels in a column is its \emph{height} $h$.
    \end{definition}
    
\noindent
\noindent We define a family of automorphisms $\lbrace T_k\rbrace_k\subset G(I)$ by induction. 
Let $C_0$ be the column $C_0\doteq I_{0,1}=\left[0,\frac{2}{3}\right)$, and let the \emph{remaining set} be $R_0=\left[\frac{2}{3},1\right]$. The height of the column $C_0$ is $h_0=1$, since it has a unique level $I_{0,1}$. We divide $C_0$ into three disjoint subintervals with same length: $I_{0,1}(1)=\left[0,\frac{2}{9}\right)$, $I_{0,1}(2)=\left[\frac{2}{9},\frac{4}{9}\right)$, $I_{0,1}(3)=\left[\frac{4}{9},\frac{2}{3}\right)$. We call \emph{spacer} the interval $S_0=\left[\frac{2}{3},\frac{8}{9}\right)$ and $R_1=R_0\setminus S_0$. We observe that the spacer has the same length of $I_{0,1}(j)$ for $j=1,2,3$. We put the spacer on the top of the middle interval $I_{0,1}(2)$ (see Figure \ref{fig:actionT1}). We define the piecewise linear map $T_1:I\rightarrow I$ in the following way: 
\begin{equation*}
\begin{cases}
     T_1(I_{0,1}(1))=I_{0,1}(2), \\
     T_1(I_{0,1}(2))=S_0,    \\
     T_1(S_0)=I_{0,1}(3).
     \end{cases}
\end{equation*}
\begin{figure}[H]
    \centering
    \includegraphics[scale=0.5]{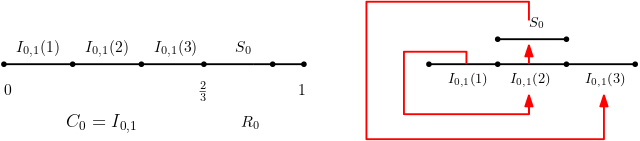}
    \caption{In the left figure the Column $C_0$, in the right figure the geometric representation of the action of the automorphism $T_1$.}
    \label{fig:actionT1}
\end{figure}
\noindent In the set $I\setminus(I_{0,1}(1)\cup I_{0,1}(2)\cup S_0)$ the map $T_1$ is defined in such a way that $T_1$ is invertible and measure-preserving (for simplicity we can assume $T_1(I_{0,1}(3))=I_{0,1}(1)$ and $T_1=id$ otherwise).
A useful notation to represent $T_1$ in a simpler way is using the language of permutations, that is
\begin{equation}
    T_1\llcorner_{[0,1]\setminus R_1}=\left(\begin{matrix}I_{0,1}(1) & I_{0,1}(2) & I_{0,1}(3) & S_0 \\ I_{0,1}(2) & S_0 & I_{0,1}(1) & I_{0,1}(3)\end{matrix}\right).
\end{equation}
\noindent We construct the column $C_1$ of height $h_1=3h_0+1=4$ putting one on the top of the other the intervals in the order $I_{0,1}(1),I_{0,1}(2),S_0,I_{0,1}(3)$: the intervals are arranged so that each point is located below its image (see Figure \ref{fig:columnC1}). We rename the levels $I_{1,1}=I_{0,1}(1), I_{1,2}=I_{0,1}(2), I_{1,3}=S_0, I_{1,4}=I_{0,1}(3)$.
\begin{figure}[H]
    \centering
    \includegraphics[scale=0.8]{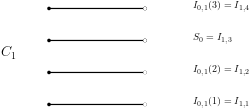}
    \caption{The column $C_1$ in which the levels are arranged one on top the other.}
    \label{fig:columnC1}
\end{figure}

\noindent
The inductive step is performed in the following way: we start with a column $C_n$ made of $h_n=3h_{n-1}+1$ levels  $I_{n,1},I_{n,2},\dots, I_{n,h_n}$. We divide each level into three disjoint consecutive subintervals of the same length, that is $I_{n,j}=I_{n,j}(1)\cup I_{n,j}(2)\cup I_{n,j}(3)$ for $j=1,\dots,h_n$. We consider the set $R_n=\left[\frac{3^{n+1}-1}{3^{n+1}},1\right]$ and we construct $S_n=\left[\frac{3^{n+1}-1}{3^{n+1}},\frac{3^{n+2}-1}{3^{n+2}}\right]$, with $|S_n|=\frac{2}{3^{n+2}}=|I_{n,j}(i)|$ for $j=1,\dots,h_n$, $i\in\lbrace 1,2,3\rbrace$ and we put it on the top of the middle interval $I_{n,h_n}(2)$ of the bottom level $I_{n,h_n}$ (see Figure \ref{fig:columnCn}). We finally define the piecewise linear map $T_{n+1}$ in the following way:
\begin{equation*}
\begin{cases}
     T_{n+1}(I_{n,j}(i))=I_{n,j+1}(i) \quad\forall j=1,\dots h_n-1, \quad\forall i\in\lbrace{1,2,3}\rbrace, \\
     T_{n+1}(I_{n,h_n}(1))=I_{n,1}(2),    \\
     T_{n+1}(I_{n,h_n}(2))=S_n, \\
     T_{n+1}(I_{n,h_n}(3))=I_{n,1}(1), \\
     T_{n+1}(S_n)=I_{n,1}(3), \\
      T_{n+1}=\text{id}\quad\text{otherwise}.
     \end{cases}
\end{equation*}
\begin{figure}
    \centering
    \includegraphics[scale=0.7]{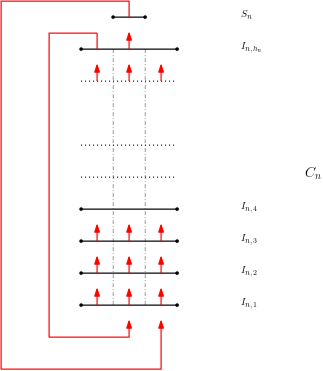}
    \caption{A column $C_n$ and the action of the automorphism $T_{n+1}$.}
    \label{fig:columnCn}
\end{figure}

\noindent The new column $C_{n+1}$ of height $h_{n+1}=3h_n+1$ is obtained arranging the levels starting from the bottom $I_{n,1}(1),I_{n,2}(1),\dots I_{n,h_n}(1), I_{n,1}(2),I_{n,2}(2),\dots I_{n,h_n}(2), S_n, I_{n,1}(3),\dots, I_{n,h_n}(3)$. We rename them as $I_{n+1,j}$ $j=1,\dots, h_{n+1}$ following the same order of the levels and we define $R_{n+1}=R_n\setminus S_{n}$.
\begin{definition}
The Canonical Chacon's map is the automorphism $T=\lim_{n\to\infty} T_n$.
\end{definition}

\noindent
Observe that the limit map $T$ is well defined, invertible and measure-preserving, indeed one can easily check that $T_{n+1}=T_n$ on $\left(\cup_{j=1}^{h_n-1} I_{n,j}\right)^c$.
\begin{theorem}[Chacon \cite{Weaklymixingnotstrongly}]\label{chacon wm not strongly}
$T$ is weakly mixing but not strongly mixing. 
\end{theorem}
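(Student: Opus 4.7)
The argument splits into two independent parts: showing $T$ is not strongly mixing, and then that $T$ is weakly mixing.

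For the \emph{non-strong mixing} part, the plan is to exploit a partial rigidity at the times $h_n$. By the inductive construction, each level $I_{n,j}$ of column $C_n$ is cut into three equal thirds which reappear as consecutive blocks of levels in $C_{n+1}$: explicitly $I_{n,j}(1) = I_{n+1,j}$, $I_{n,j}(2) = I_{n+1, h_n + j}$, $I_{n,j}(3) = I_{n+1, 2h_n + 1 + j}$, and $T$ acts on $C_{n+1}$ by shifting each level up by one position whenever the target index is $< h_{n+1}$. Since $h_n + j \leq 2h_n < h_{n+1}$, no top-of-column effects interfere, and therefore $T^{h_n}(I_{n,j}(1)) = I_{n,j}(2) \subset I_{n,j}$, which yields
\begin{equation*}
|T^{h_n}(I_{n,j}) \cap I_{n,j}| \geq \tfrac{1}{3}|I_{n,j}|.
\end{equation*}
By additivity this lower bound extends to any finite union of $C_n$-levels, and since the algebra generated by the $C_n$-levels (together with the remainders $R_n$, whose measure $1/3^{n+1}$ tends to $0$) is dense in $\mathcal{B}([0,1])$, an approximation argument using the measure-preservation of $T$ gives $\liminf_n |T^{h_n}(A) \cap A| \geq |A|/3$ for every $A \in \mathcal{B}([0,1])$. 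Choosing $A$ with $0 < |A| < 1/3$ gives $|A|/3 > |A|^2$, contradicting strong mixing \eqref{mix}.

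For the \emph{weak mixing} part, I would apply Theorem \ref{mixing theorem} and show that $U_T$ has no non-constant eigenfunctions. Suppose for contradiction $U_T f = \lambda f$ with $\|f\|_{L^2} = 1$; ergodicity of $T$ (standard for rank-one constructions, via Rokhlin-tower approximation) forces $|\lambda| = 1$ and $|f| \equiv 1$ a.e. Define the level averages $a_{n,j} \doteq |I_{n,j}|^{-1} \int_{I_{n,j}} f \, dm$. Because $T$ maps $I_{n,j}$ onto $I_{n,j+1}$ by a translation for $j < h_n$, the eigenfunction relation yields the geometric progression $a_{n,j} = \lambda^{j-1} a_{n,1}$. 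Martingale convergence $E[f \mid \mathcal{A}_n] \to f$ in $L^2$ (where $\mathcal{A}_n$ is the finite $\sigma$-algebra generated by the levels of $C_n$), combined with $|R_n| \to 0$ and $|a_{n,j}| \leq 1$, produces $\sum_{j=1}^{h_n} (1 - |a_{n,j}|^2)\, |I_{n,j}| \to 0$; since $|\lambda^{j-1}| = 1$, this forces $|a_{n,1}| \to 1$.

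The crux of the proof, which I expect to require the most care, is the self-similar identity obtained by averaging $f$ over the three thirds of $I_{n,j}$:
\begin{equation*}
a_{n,j} = \tfrac{1}{3}\bigl(a_{n+1,j} + a_{n+1, h_n+j} + a_{n+1, 2h_n+1+j}\bigr) = \tfrac{1}{3}\bigl(1 + \lambda^{h_n} + \lambda^{2h_n+1}\bigr)\lambda^{j-1} a_{n+1,1}.
\end{equation*}
Setting $j=1$ and passing to moduli with $|a_{n,1}|,\, |a_{n+1,1}| \to 1$ yields $|1 + \lambda^{h_n} + \lambda^{2h_n+1}| \to 3$. The triangle inequality on the unit circle saturates only when the three unit summands coincide, so $\lambda^{h_n} \to 1$ and $\lambda^{2h_n+1} \to 1$. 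Writing $\lambda^{2h_n+1} = \lambda \cdot (\lambda^{h_n})^2$ and letting $n \to \infty$ forces $\lambda = 1$, so by ergodicity $f$ is constant, a contradiction. The hardest part is ensuring $|a_{n,1}| \to 1$ rigorously and correctly managing the interaction of the spacer (the extra ``$+1$'' in the exponent $2h_n+1$) with the geometric progression—this single shift is precisely what destroys all non-trivial eigenvalues and is the essence of Chacon's original idea.
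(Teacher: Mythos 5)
Your proof is correct, and note first that the paper never proves Theorem \ref{chacon wm not strongly} in one dimension directly: it defers to the two-dimensional analogue, Proposition \ref{prop:wm:not:sm}, so that is the argument to compare against. On the non-strong-mixing half you and the paper do essentially the same thing: partial rigidity along the heights $h_n$, coming from $T^{h_n}(I_{n,j}(1))=I_{n,j}(2)$ (the paper's $U^{h_k}(Q_{n,i}(1))=Q_{n,i}(2)$, with constant $\tfrac14$ in place of your $\tfrac13$); your extension of the bound $\liminf_n|T^{h_n}(A)\cap A|\geq\tfrac13|A|$ to all Borel sets $A$ is more than is needed, since a single level of small measure already violates \eqref{mix}, but it is a correct strengthening. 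On the weak-mixing half both proofs go through Theorem \ref{mixing theorem} and both isolate $\lambda$ from the same pair of return times via $\lambda=\lambda^{2h_n+1}\bigl(\lambda^{h_n}\bigr)^{-2}$, but the mechanisms for showing $\lambda^{h_n},\lambda^{2h_n+1}\to1$ differ genuinely. The paper follows Chacon's pointwise route: it takes $A=\lbrace x:|f(x)-k|<\epsilon\rbrace$, finds a level in which $A$ has density greater than $\tfrac78$ (via sufficient semi-rings), and produces points of $A$ returning to $A$ at times $h_n$ and $2h_n+1$, so that $\lambda^{h_n}$ and $\lambda^{2h_n+1}$ appear as ratios of numbers $\epsilon$-close to $k$. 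You instead work with the level averages $a_{n,j}$, use the geometric progression $a_{n,j}=\lambda^{j-1}a_{n,1}$ together with martingale convergence to force $|a_{n,1}|\to1$, and read off $|1+\lambda^{h_n}+\lambda^{2h_n+1}|\to3$ from the self-similar identity, concluding by saturation of the triangle inequality on the unit circle. Your route avoids the density-point selection and the mildly delicate bookkeeping of $k$ versus $\epsilon$ in the paper (one must keep $k$ bounded away from $0$ there), at the price of invoking martingale convergence and the fact that the level partitions generate the Borel $\sigma$-algebra mod null sets (true, since the levels are intervals of length $2\cdot3^{-(n+1)}$ and $|R_n|\to0$); both arguments take ergodicity as an external input (the paper from the ergodicity proposition preceding Proposition \ref{prop:wm:not:sm}, you from the standard rank-one argument), so neither is self-contained on that point. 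One cosmetic remark: once $\lambda=1$, invariance plus ergodicity makes $f$ constant, which is precisely the conclusion Theorem \ref{mixing theorem} requires, so the contradiction framing is unnecessary.
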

\noindent We will give a proof of this theorem in the case of two dimensional examples (see Proposition \ref{prop:wm:not:sm}).

\section{Configurations and movements}\label{subS_rows_columns}
In this subsection we introduce the notion of rows, columns and configurations. The advantage of this abstract description is to simplify the action of two-dimensional automorphisms on the torus as the composition of a finite number of \emph{movements}, where each one can be described as the flow (evaluated at time $t=1$) of a divergence-free $\BV$ vector field. 
\begin{definition}

\label{row}
A row $r(i)$ of \emph{length} $n\in\N$ and index $i\in\N$ is a integer-valued $1\times n$ matrix $$r(i)=\begin{matrix}(i,j_1 & i,j_2 & \dots  & i,j_n)\end{matrix},$$ where $j_1,j_2,\dots,j_n\in\N$.
\end{definition}
\begin{definition}
\label{column}
A column $c(j)$ of \emph{length} $n\in\N$ and index $j\in\N$ is a integer-valued $n\times 1$ matrix $$c(j)=\left(\begin{matrix}i_1,j & i_2,j 
& \dots  & i_n,j\end{matrix}\right)^T,$$ where $i_1,i_2,\dots,i_n\in\N$.
\end{definition}

\begin{definition}
A configuration $\gamma$ of \emph{size} $n\in\N$ is a integer-valued $n\times n$ matrix, where the entries $\gamma_{i,j}\in\lbrace{1,2,\dots,n^2\rbrace}$ and $\gamma_{i,j}=\gamma_{h,k}$ iff $i=h,j=k$. We can denote $\gamma$ both \emph{by rows} as $\gamma=(r(1), r(2),\dots ,r(n))^T$ and \emph{by columns} $\gamma=(c(1),c(2),\dots,c(n))$. We call $\mathcal{C}(n)$ the \emph{space of configurations} of size $n$. \end{definition}
\begin{remark}
One can easily observe that $\sharp\mathcal{C}(n)=n^2!$, being $\gamma$ all possible permutations of $\lbrace 1,2,\dots,n^2\rbrace$.
\end{remark}

\begin{definition}\label{adjacent}
Let $\gamma\in\mathcal{C}(n)$ be a configuration. Two entries  $\gamma_{i,j}$, $\gamma_{i',j'}$ are \emph{adjacent} if $|i'-i|=1$ and $j'=j$ or $i'=i$ and $|j'-j|=1$. 
\end{definition}

\noindent

\begin{definition}
A movement is a bijective map $S:\mathcal{C}(n)\rightarrow \mathcal{C}(n)$.
\end{definition}
\noindent
\begin{definition}[Simple exchange]
Let $\gamma\in\mathcal{C}(n)$ and let $\gamma_{i,j}$, $\gamma_{i',j'}$ be two adjacent entries. For simplicity we can assume $i'=i+1, j'=j$ according to Definition \ref{adjacent}.  A \emph{simple exchange} is a map $E_s(i,j;i+1;j):\mathcal{C}(n)\rightarrow\mathcal{C}(n)$ that exchanges the two entries, that is
\begin{align*}
    E_s(i,j;i+1;j)\left(\begin{matrix} 
    \dots & \dots & \dots & \dots & \dots\\
    i,1 &\dots & i,j & \dots & i,n \\
    i+1,1 &\dots & i+1,j & \dots & i+1,n \\
    \dots & \dots & \dots & \dots & \dots
   \end{matrix}\right)=\left(\begin{matrix} 
    \dots & \dots & \dots & \dots & \dots\\
    i,1 &\dots & i+1,j & \dots & i,n \\
    i+1,1 &\dots & i,j & \dots & i+1,n \\
    \dots & \dots & \dots & \dots & \dots
    \end{matrix}\right)
\end{align*}
\end{definition}

\begin{definition}[Sort]
Let $\gamma\in\mathcal{C}(n)$ and let $i\in\N$ be some fixed row index. Let $j,j'\in \N$ with $j<j'$. Then the \emph{sort} operation on columns is $S_c(i;j,j'):\mathcal{C}(n)\rightarrow\mathcal{C}(n)$ defined by
\begin{align*}
    S_c(i;j,j')&\left(\begin{matrix}\dots &\dots &\dots &\dots &\dots &\dots &\dots \\
    \dots &i,j &i,j+1 &\dots & i,j'-1 & i,j' &\dots \\
    \dots &\dots &\dots &\dots &\dots &\dots &\dots
    \end{matrix}\right)=\\&\left(\begin{matrix}\dots& \dots &\dots &\dots &\dots &\dots &\dots \\
    \dots &i,j' &i,j & i,j+1 &\dots & i,j'-1 &\dots \\
    \dots &\dots &\dots &\dots &\dots &\dots &\dots
    \end{matrix}\right).
\end{align*}
Similarly, if $i,i',j\in\N$ and $i<i'$
$S_r(i,i';j):\mathcal{C}(n)\rightarrow\mathcal{C}(n)$ is defined by
\begin{equation}
    S_r(i,i';j)\left(\begin{matrix}\dots &\dots &\dots \\
    \dots &i,j &\dots \\
    \dots &i+1,j &\dots  \\
    \dots &\dots &\dots \\
    \dots &i',j &\dots \\
    \dots &\dots &\dots \\
    \end{matrix}\right)=\left(\begin{matrix}\dots &\dots &\dots \\
    \dots &i',j &\dots \\
    \dots &i,j &\dots  \\
    \dots &\dots &\dots \\
    \dots &i'-1,j &\dots \\
    \dots &\dots &\dots \\
    \end{matrix}\right).
\end{equation}
\end{definition}
\begin{example}\label{ex:sort}
We give the example of some configuration $\gamma$ and the new configuration $S_c(1;2,4)(\gamma)$ obtained by exchanging rows:\begin{equation*}
    \gamma=\left(\begin{matrix}1 & 2 & 3 & 4 \\ 5 & 6 & 7 & 8 \\ 9 & 10 & 11 & 12 \\ 13 & 14 & 15 & 16
    \end{matrix}\right) \qquad
    S_c(1;2,4)(\gamma)=\left(\begin{matrix}1 & 4 & 2 & 3 \\ 5 & 6 & 7 & 8 \\ 9 & 10 & 11 & 12 \\ 13 & 14 & 15 & 16
    \end{matrix}\right).
    \end{equation*}
\end{example}
\begin{remark}\label{rk:shift:vs:sort}
 We observe that if $j=1$ and $j'=n$ then the sort operation is simply a shift. Since we will work on the torus, the shifts are easier to perform than sort movements. In particular we will see that the cost of a shift, in terms of the total variation, is lower than the cost of a sort movement. 
\end{remark}

\begin{definition}[Rotation]
Let $\gamma\in\mathcal{C}(n)$ and let $i\geq 2$ and $j<n$. Then the counterclockwise rotation $ R^{-}_{i,j}:\mathcal{C}(n)\rightarrow\mathcal{C}(n)$ is the following map
\begin{equation}
    R^{-}_{i,j}\left(\begin{matrix} 
          \dots & \dots & \dots & \dots\\
         \dots & i-1,j & i-1,j+1 &\dots &   \\
    \dots & i,j & i,j+1 & \dots \\
    
    \dots & \dots & \dots & \dots \\
    
    \end{matrix}\right)=\left(\begin{matrix} 
         \dots & \dots & \dots & \dots\\
           \dots & i-1,j+1 & i,j+1 &\dots &   \\
     \dots & i-1,j & i,j & \dots \\
    
    \dots & \dots & \dots & \dots \\
    \end{matrix}\right),
    \end{equation}
while the clockwise rotation $ R^{+}_{i,j}:\mathcal{C}(n)\rightarrow\mathcal{C}(n)$ is the following map
\begin{equation}
    R^{+}_{i,j}\left(\begin{matrix} 
          \dots & \dots & \dots & \dots\\
         \dots & i-1,j & i-1,j+1 &\dots &   \\
    \dots & i,j & i,j+1 & \dots \\
    
    \dots & \dots & \dots & \dots \\
    
    \end{matrix}\right)=\left(\begin{matrix} 
          \dots & \dots & \dots & \dots\\
         \dots & i,j & i-1,j &\dots &   \\
    \dots & i,j+1 & i-1,j+1 & \dots \\
    
    \dots & \dots & \dots & \dots \\
    \end{matrix}\right).
    \end{equation}
\end{definition}
\begin{example}
We give the example of some configuration $\gamma$ and the new configuration $ R^{-}_{2,3}(\gamma)$ obtained by a counterclockwise rotation:\begin{equation*}
    \gamma=\left(\begin{matrix}1 & 2 & 3 & 4 \\ 5 & 6 & 7 & 8 \\ 9 & 10 & 11 & 12 \\ 13 & 14 & 15 & 16
    \end{matrix}\right) \qquad
    R^{-}_{2,3}(\gamma)=\left(\begin{matrix}1 & 2 & 4 & 8 \\ 5 & 6 & 3 & 7 \\ 9 & 10 & 11 & 12 \\ 13 & 14 & 15 & 16
    \end{matrix}\right).
    \end{equation*}
\end{example}
\subsection{Flows of vector fields associated with movements}\label{Ss:vf}
From now on we will consider the two dimensional torus $\mathbb{T}^2= [0,1]^2$ with periodic boundary conditions. The idea of this subsection is to apply the previous description via configurations, rows and columns to the two dimensional torus. In particular our aim is to find vector fields whose flow acts on the torus as the movements previously defined (Sort, Exchange, Rotation) and we want to give some estimates on their total variation. We remark here that the $\BV$ regularity of the vector fields make possible all these constructions, allowing for rigid \emph{cut and paste motions}, in the spirit of \cite{Bianchini_Zizza_residuality} and \cite{univ:mixer}. \\

\noindent Following the notation introduced in \cite{Bianchini_Zizza_residuality} we define a flow that rotates subrectangles of $\mathbb{T}^2$ and the vector field associated with it.  More precisely we define the \emph{rotation flow} $r_{t}:\mathbb{T}^2\rightarrow \mathbb{T}^2$ for $t\in[0,1]$ in the following way: call
\begin{equation*}
	V(x)=\max \left\{ \left| x_1-\frac{1}{2}\right|, \left| x_2-\frac{1}{2}\right|\right\}^2, \quad (x_1,x_2)\in \mathbb{T}^2.
\end{equation*}
Then the \emph{rotation field} is $r:\mathbb{T}^2\rightarrow \R^2$
\begin{equation}
	\label{rotation field}
	{r}(x)=\nabla^\perp V(x),
\end{equation}
where $\nabla^\perp =(-\partial_{x_2},\partial_{x_1})$ is the orthogonal gradient. Finally the rotation flow $r_{t}$ is the flow of the vector field $r$, i.e. the unique solution to the following ODE system:
\begin{equation}
\label{rot:flow:square}
	\begin{cases} 
	\dot {r}(t,x)={r}(r(t,x)), \\
	r(0,x)=x.
	\end{cases}
\end{equation}
This flow rotates the unit square $[0,1]^2$ counterclockwise of an angle $\frac{\pi}{2}$ in a unit interval of time. We recall here Lemma 2.6 in \cite{Bianchini_Zizza_residuality} that gives the cost, in terms of the total variation, of the rotation of a rectangle:
\begin{lemma}\label{TV:rotations}
Let $Q\subset[0,1]^2$ be a rectangle of sides $a,b>0$. Consider the rotating flow on the torus 
\begin{equation*}
    (R_Q)(t,x)=\chi_Q^{-1}\circ r(t)\circ \chi_Q(x), \quad\text{if }x\in Q.
\end{equation*}
where $\chi_Q:Q\rightarrow [0,1]^2$ is the affine map sending $Q$ into $[0,1]^2$ and $r(t)$ is the rotation flow defined in \eqref{rot:flow:square}. Let $b(t)^{R_Q}$ the divergence-free vector field associated with $(R_{Q})(t)$ (extended to $0$ outside the rectangle $Q$). Then \begin{equation*}
\TV(b^R(t))(\mathbb{T}^2) = 4 a^2 + 4 b^2, \quad \forall t\in [0,1].
\end{equation*}
\end{lemma}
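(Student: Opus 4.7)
The strategy is to produce an explicit formula for $b^{R_Q}$ and then compute its total variation by decomposing the measure $|Db^{R_Q}|$ into its absolutely continuous and jump parts (the Cantor part is absent, since the field will turn out to be piecewise linear).

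First, I would derive a closed formula for $b^{R_Q}$ from the definition \eqref{vect:field:ass}. Differentiating $R_Q(t,\cdot)=\chi_Q^{-1}\circ r(t)\circ\chi_Q$ in $t$ and using the defining ODE \eqref{rot:flow:square} one obtains $\dot R_Q(t,x)=\diag(a,b)\,r(r(t,\chi_Q(x)))$; combining this with the identity $\chi_Q(R_Q^{-1}(t,x))=r(t)^{-1}(\chi_Q(x))$ shows that $b^{R_Q}(t,x)=\diag(a,b)\,r(\chi_Q(x))$ on $Q$ and $0$ outside. In particular $b^{R_Q}$ is autonomous, so $\TV(b^{R_Q}(t))$ is independent of $t$ from the outset.

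Next, I would exploit the piecewise linear structure of $r=\nabla^\perp V$. The potential $V$ is $C^\infty$ on each of the four open triangles of $[0,1]^2$ cut by the diagonals $y_1-\tfrac12=\pm(y_2-\tfrac12)$: on the east/west pair $V=(y_1-\tfrac12)^2$ hence $r=(0,2(y_1-\tfrac12))$, and on the north/south pair $V=(y_2-\tfrac12)^2$ hence $r=(-2(y_2-\tfrac12),0)$. Pulling back by $\chi_Q$ produces the same pattern on four triangles of $Q$, with constant Jacobians of Frobenius norm $2b/a$ on east/west and $2a/b$ on north/south.

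Then I would split $|Db^{R_Q}|(\mathbb{T}^2)$ as a sum of three contributions and compute each by a direct integration. The absolutely continuous part, obtained by integrating $2b/a$ and $2a/b$ over triangles of area $ab/4$, equals $a^2+b^2$. The interior jump set consists of the two diagonals of $Q$; parametrising each of the four half-diagonals from the center of $Q$ to a corner and noting that the two traces of $b^{R_Q}$ differ by a vector of Euclidean norm growing linearly from $0$ at the center, integration against arc length gives $a^2+b^2$. Finally, the jump across $\partial Q$, where $b^{R_Q}$ is tangential with magnitude $a$ on the horizontal sides and $b$ on the vertical sides and vanishes outside $Q$, contributes $2(a^2+b^2)$. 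Summing yields $\TV(b^{R_Q}(t))(\mathbb{T}^2)=4a^2+4b^2$.

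The main delicate point is the last contribution: one must check that extending $b^{R_Q}$ by zero outside $Q$ produces a genuinely $\BV$ (and still divergence-free) vector field, and correctly account for the discontinuity on $\partial Q$. The tangentiality of $b^{R_Q}$ on $\partial Q$---the same property that makes $R_Q(t)$ preserve $Q$---is what ensures the extension is natural and allows an easy evaluation of the normal jump. The absolutely continuous and interior-jump calculations are then routine once the piecewise linear formula is in hand.
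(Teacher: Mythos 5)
The paper does not actually prove this lemma: it is imported verbatim as Lemma 2.6 of \cite{Bianchini_Zizza_residuality}, so there is no in-paper argument to compare against. Your proof is correct and self-contained: the closed formula $b^{R_Q}(t,x)=\diag(a,b)\,r(\chi_Q(x))$ is right (so the field is autonomous and the $t$-independence is immediate), and the three contributions check out --- $a^2+b^2$ from the absolutely continuous part on the four triangles, $a^2+b^2$ from the jump across the two diagonals (where the jump vector at the point of the half-diagonal at parameter $t\in[0,1]$ has norm $t\sqrt{a^2+b^2}$ and the arc-length element is $\tfrac12\sqrt{a^2+b^2}\,dt$), and $2(a^2+b^2)$ from the tangential jump across $\partial Q$. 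Your closing remark is also the right one to flag: since $V\circ\chi_Q\equiv\tfrac14$ on $\partial Q$, the field is tangent to $\partial Q$ and the extension by zero remains divergence-free; note moreover that all the matrices appearing in $Db^{R_Q}$ (one nonzero entry in the a.c.\ part, rank one on the jump set) have the same Frobenius, operator and entrywise norms, so the constant $4a^2+4b^2$ does not depend on the normalization of the total variation of a matrix-valued measure.
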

 \noindent
 Let us fix now some $k\in\N$ giving the size of the partition, and let us consider the grid given by $\N\times\N\frac{1}{k}$ made of squares of side $\frac{1}{k}$. We consider the subsquares $Q_{i,j}=[\frac{j-1}{k},\frac{j}{k}]\times[\frac{k-i}{k},\frac{k-i+1}{k}]$ and we identify each subsquare with  an entry $\gamma_{ij}$ of some configuration $\gamma\in\mathcal{C}(k)$. A horizontal stripe $H_i=[0,1]\times[\frac{k-i}{k},\frac{k-i+1}{k}]$ is a row, while a vertical stripe $V_j=[\frac{j-1}{k},\frac{j}{k}]\times [0,1]$ is a column. We observe that if $Q_{i,j}$ and $Q_{i',j'}$ are adjacent subsquares (that is, they share a common side), then $\gamma_{i,j}$ and $\gamma_{i',j'}$ are adjacent entries (Definition \ref{adjacent}). From now on we will identify every $\gamma\in\mathcal{C}(k)$ with an enumeration of the subsquares of the torus, and any movement $T:\mathcal{C}(k)\rightarrow\mathcal{C}(k)$ with an automorphism $T:\T^2\rightarrow\T^2$that sends rigidly every subsquare of the grid $\N\times\N\frac{1}{k}$ into another one of the same grid. We start introducing the vector fields whose RLF, when evaluated at time $t=1$, are movements. \\
 
 \noindent We first recall the transposition vector field, first introduced in \cite{Bianchini_Zizza_residuality}. 
 Let $\kappa_i,\kappa_j$ be two adjacent squares of size $\frac{1}{k}$ and let $Q=\kappa_i\cup \kappa_j$, then the \emph{transposition flow} between $\kappa_i,\kappa_j$ is $T(t)(\kappa_i,\kappa_j):[0,1]\times \T^2\rightarrow \T^2$ defined as
\begin{equation}
	\label{transposition flow}
		T(t)(\kappa_i,\kappa_j)=
	\begin{cases}
	\chi^{-1}\circ r(4t)\circ \chi & x\in \mathring{Q}, \ t\in\left[0,\frac{1}{2}\right], \\
		\chi_i^{-1}\circ r(4t)\circ \chi_i & x\in\mathring{\kappa}_i, \ t\in\left[\frac{1}{2},1\right], \\
		\chi_j^{-1}\circ r(4t)\circ \chi_j & x\in\mathring{\kappa}_j, \ t\in\left[\frac{1}{2},1\right], \\
		x & \text{otherwise},
	\end{cases}
\end{equation}
where the map $\chi:Q\rightarrow \T^2$ is the affine map sending the rectangle $Q$ into the torus $\T^2$, $\chi_{i},\chi_{j}$ are the affine maps sending $\kappa_i,\kappa_j$ into the torus $\T^2$ and $r$ is the rotation flow (\ref{rot:flow:square}). This invertible measure-preserving flow has the property to exchange the two subsquares in the unit time interval (Figure \ref{fig:transp}). Moreover, by the computations done in  Lemma \ref{TV:rotations}, we can estimate the total variation of the vector field $b^T(t)(\kappa_i,\kappa_j)$ associated with $ T(t)(\kappa_i,\kappa_j)$ (recall \ref{vect:field:ass}) as
\begin{equation}
\label{TV:trasp}
	\TV(b^T(t)(\kappa_i,\kappa_j))(\T^2)\leq 4\frac{20}{k^2}.
\end{equation}
We also observe that also $L^\infty$ estimates on the vector field are easily available: indeed
\begin{equation}\label{inf:transpo}
    \|b^T(\kappa_i,\kappa_j)\|_{L^\infty_{t.x}}\leq 4\frac{2}{k}.
\end{equation}
\begin{figure}
    \centering
    \includegraphics[scale=0.6]{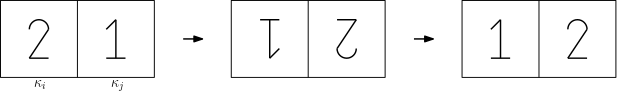}
    \caption{A transposition between two squares is an exchange.}
    \label{fig:transp}
\end{figure}
 \begin{definition}[Simple exchange vector field] Let us fix $i,i'\leq k$ with $|i-i'|=1$ and $j\leq k$ (or alternatively $i\in\N$ and $j,j'\in\N$ with $|j-j'|=1$). The simple exchange \emph{vector field} is  $b(t)(i,j;i',j)\in L^\infty_t\BV_x$ such that, if $X(i,j;i',j)$ is its RLF evaluated at time $t=1$, then
 \begin{equation}
     X(i,j;i',j)(\T^2)=E_s(i,j;i',j)(\gamma),\quad\forall\gamma\in\mathcal{C}(k).
 \end{equation}
 The construction of this vector field is easy: fix for example $i'=i+1$. Then take the two adjacent subsquares $Q_{ij}$ and $Q_{i+1,j}$ and perform a transposition between them $T(t)(Q_{i,j},Q_{i+1,j})$. Then define
 \begin{equation*}
     b(t)(i,j;i+1,j)=b^T(t)(Q_{ij},Q_{i+1,j}).
 \end{equation*}
 Clearly one has 
\begin{equation}\label{est:simple:exch}
     \|b(i,j;i+1,j)\|_{L^\infty_{t,x}}\leq \frac{8}{k},\qquad\TV(b(t)(i,j;i+1,j))(\T^2)\leq 4\frac{20}{k^2}.
 \end{equation}
 \end{definition}

\noindent Similarly we have \begin{definition}[Sort vector field]
Let $i\leq k$ be some fixed row index. Let $j,j'\leq k$ with $j<j'$. Then the \emph{sort vector field} (on columns) is $b^{S_c}(i;j,j')(t)\in L^\infty_t\BV_x$ such that, if $X^{S_c}(i;j,j')$ is its RLF evaluated at time $t=1$ then 
\begin{equation}
   X^{S_c}(i;j,j')(\T^2)=S_c(i;j,j')(\gamma),\quad\forall\gamma\in\mathcal{C}(k).
\end{equation}
\end{definition}
\begin{remark}
Similarly one has the sort vector field on rows $b^{S_r}(i,i';j)(t)$ for some $i<i'\leq k$, $j\leq k$.
\end{remark}
\noindent To construct a sort vector field (on columns, but the construction for rows is identical), we fix  $\gamma\in\mathcal{C}(k)$, a row index $i$ and two columns indices $j<j'$. The idea is to perform $j'-j$ exchanges between squares (recalling that an exchange is a transposition). For clarity see Figure \ref{fig:sort:columns} and compare with Example \ref{ex:sort}.
\begin{figure}
    \centering
    \includegraphics[scale=0.45]{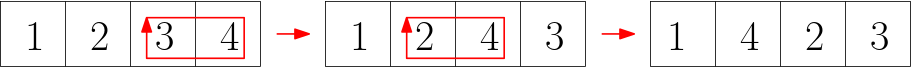}
    \caption{\small{An example of a sort vector fields on a horizontal stripe of the torus. Here $j=2,j'=4$. The transpositions to be performed are $j'-j=2$.}}
    \label{fig:sort:columns}
\end{figure}
\noindent We define the flow in the following way:
\begin{equation}
    {X}^{S_c}(i;j,j')(t)=\begin{cases}
    T_{(j'-j)t}\left(Q_{i(j'-1)},Q_{ij'}\right) \quad\text{for } t\in\left[0,\frac{1}{j'-j}\right], \\
    
    T_{(j'-j)t-1}\left(Q_{i(j'-2)},Q_{i(j'-1)}\right) \quad\text{for } t\in\left[\frac{1}{j'-j},\frac{2}{j'-j}\right], \\
    \dots \\
    T_{(j'-j)t-(j'-j-2)}\left(Q_{i(j+1)},Q_{i(j+2)}\right) \quad\text{for } t\in\left[\frac{j'-j-2}{j'-j},\frac{j'-j-1}{j'-j}\right], \\
    T_{(j'-j)t-(j'-j-1)}\left(Q_{ij},Q_{i(j+1)}\right) \quad\text{for } t\in\left[\frac{j'-j-1}{j'-j},1\right],  \\
    x\quad\text{otherwise}.
    \end{cases}
\end{equation}
By definition one has 
\begin{equation}\label{eq:est:flow:2}
    b^{S_c}(i;j,j')(t)=\dot{X}^{S_c}(i;j,j')(t)\circ {X}^{S_c}(i;j,j')^{-1}(t),
\end{equation}
so that 
\begin{equation*}
    \sup_{t,x} |b^{S_c}(i;j,j')(t,x)|\leq \sup_{t,x} |\dot{X}^{S_c}(i;j,j')(t,x)|\leq (j-j')\frac{8}{k}\leq 8,
\end{equation*}
being the rotation vector field of the order of the side of subsquares. This implies that
\begin{equation}\label{est:sort:1}
    \|b^{S_c}(i;j,j')\|_{L^\infty_t L^\infty_x}\leq 8.
\end{equation}

\noindent Similarly one has, for every $t\in[0,1]$
\begin{equation}\label{est:sort:2}
    \TV(b^{S_c}(t)(i;j,j'))(\T^2)\leq (j'-j)\cdot 4\cdot 4\left(\frac{4}{k^2}+\frac{1}{k^2}\right)\leq \frac{80}{k}.
\end{equation}
\begin{remark} Recalling Remark \ref{rk:shift:vs:sort} we observe that for the shift operation (that is $j'=k$ and $j=1$) we can consider another vector field, exploiting the structure of the torus. We will still denote it as the sort vector field, but
\begin{equation}\label{eq:shift:vf}
    b(t,x)(i;1,k)=\begin{cases}
    \frac{1}{k}, \quad \text{if } x\in H_{i}, \\
    0\quad\text{otherwise}.
    \end{cases}
\end{equation}
So we have instead the following estimates:
\begin{equation}\label{est:shift}
    \|b(i,1,k)\|_{L^\infty_{t,x}}\leq \frac{1}{k}, \qquad \|\TV(b(i,1,k)(\T^2)\|_{\infty}\leq 2\frac{1}{k}. 
\end{equation}
This tells us that both the shift vector field and the sort vector field have total variation of the order of the side of the squares of the grid.
\end{remark}
\begin{definition}[Rotation vector field]
 Let $2\leq i\leq k$ and $j\leq k-1$. Then the counterclockwise rotation vector field is $ b^-(i,j)\in L^\infty_t\BV_x$ such that, if $ {X}^-(i,j)$ is its RLF evaluated at time $t=1$, then
\begin{equation}
    {X}^-(i,j)(\T^2)=R^-_{ij}(\gamma),\quad\forall\gamma\in\mathcal{C}(k). 
\end{equation}

\noindent The clockwise rotation vector field is $ b^+(i,j)\in L^\infty_t\BV_x$ such that, if $ {X}^+(i,j)$ is its RLF evaluated at time $t=1$, then
\begin{equation}
    {X}^+(i,j)(\T^2)=R^+_{ij}(\gamma),\quad\forall\gamma\in\mathcal{C}(k).
\end{equation}
\end{definition}
\noindent We write here just the counterclockwise case since the other one is identical. Here we have that, if we fix $i,j$ as in the definition, we call $Q=Q_{(i-1)j}\cup Q_{ij}\cup Q_{i(j+1)}\cup Q_{(i-1)(j+1)}$, then 
\begin{equation}
    X^-(i,j)(t)=\begin{cases}
    \chi_Q^{-1}\circ r_{2t}\circ\chi_Q,\quad x\in\mathring{Q},t\in\left[0,\frac{1}{2}\right], \\
    \chi_{lm}^{-1}\circ r^{-1}_{2t}\circ\chi_{lm},\quad x\in\mathring{Q_{lm}},t\in\left[\frac{1}{2},1\right], l=i, m=j, j+1, \text{ or } l=i-1, m=j,j+1, \\
    x \quad \text{ otherwise, }
    \end{cases}
\end{equation}
where $\chi$ is the affine map sending $Q$ into $\T^2$ and $\chi_{lm}$ is the affine map sending $Q_{lm}$ into $\T^2$.
Again, one has
\begin{equation}\label{eq:est:flow:3}
|\dot X^{-}(i,j)(t,x)|\leq \frac{2}{k},
    \end{equation}
    from which one gets
\begin{equation}\label{est:rot}
    \|b^-(i,j)\|_{L^\infty_{t,x}}\leq\frac{2}{k},\qquad  \TV(b^-(t)(i,j))(\T^2)\leq 2\cdot 4\left(\frac{4}{k^2}+\frac{4}{k^2}\right),\quad\forall t.
\end{equation}

\noindent In the Figure \ref{fig:rotations} an example of the action of a clockwise rotation vector field. 

\begin{figure}[H]
    \centering
    \includegraphics[scale=0.7]{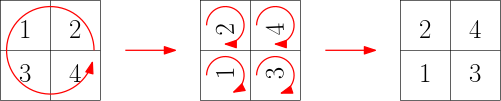}
    \caption{Action of the counterclockwise rotation vector field, where $k=2$, $i=2$, $j=1$.}
    \label{fig:rotations}
\end{figure}
\begin{remark}
If $X:[0,1]\times\T^2\rightarrow T^2$ is one of the flows previously considered (simple exchange, sort/shift, rotation), then, if $k$ is the side of the squares in which the torus is tiled, one has
\begin{equation}\label{eq:est:flow:final}
    \| \dot{X}\|_{L^\infty_tL^\infty_x}\leq\max\left(\frac{8}{k},\frac{(j-j')8}{k},\frac{2}{k}\right)
\end{equation}
We remark also that, if $b$ is a vector field associated with a movement, then, $\forall t\in[0,1]$, it holds
\begin{equation}\label{eq:est:flow:final:TV}
    \| \TV(b)(\T^2)\|_{\infty}\leq \max \left(\frac{80}{k^2},(j-j')\frac{80}{k^2},\frac{64}{k^2}\right),
\end{equation}
which follows by the second of \eqref{est:simple:exch},\eqref{est:sort:2} and the second of \eqref{est:rot}. 
\end{remark}

\section{Two dimensional construction}\label{S:twod}
Our aim is the construction of a weakly mixing vector field which is not strongly mixing, that is a divergence-free vector field $b\in L^\infty([0,1],\BV(\mathbb{T}^2))$ whose RLF $X\llcorner_{t=1}$ when evaluated at time $t=1$ is a weakly mixing automorphism of $\mathbb{T}^2$ but not a strongly mixing automorphism. The idea is to adapt Chacon's one-dimensional construction of Subsection \ref{Ss:chacon} and decompose the map into simple movements (Section \ref{subS_rows_columns}), described as the flow of some divergence-free BV vector field (Subsection \ref{Ss:vf}). 
  Also in this case the fundamental idea is to define $U$  as the limit of a family of automorphisms $\lbrace U_k\rbrace_k$. \\
 
 \noindent Let us consider the two dimensional torus $\T^2$ as the unit square $Q_1=Q_{1,1}=[0,1]^2$ with the canonical identification of boundaries. Using the notation via configurations (see Section \ref{subS_rows_columns}) we say that $Q_1$ can be identified with the configuration $\gamma_1=(1)\in\mathcal{C}(1)$. We define $h_1=0$ and we divide the square $Q_1=Q_{1,1}$ into four identical subsquares each one of side $\frac{1}{2}$, more precisely: $Q_{1,1}=Q_{1,1}(1)\cup Q_{1,1}(2)\cup Q_{1,1}(3)\cup R_1$ where $Q_{1,1}=[0,\frac{1}{2}]\times[\frac{1}{2},1]$, $Q_{1,1}(2)=[\frac{1}{2},1]\times[\frac{1}{2},1]$ and $Q_{1,1}(3)=[0,\frac{1}{2}]\times [0,\frac{1}{2}]$. Clearly $R_1=[\frac{1}{2},1]\times[0,\frac{1}{2}]$. We define $U_1$ on these subsquares in such a way that 
 \begin{equation}
     U_1(Q_{1,1}(1))=Q_{1,1}(2), \quad U_{1}(Q_{1,1}(2))=Q_{1,1}(3)
 \end{equation}
 and $U_1$ is measure-preserving and invertible on $Q_{1,1}$. More precisely, we define $U_1$ as 
 \begin{equation}
     U_1(x)=\begin{cases}
     x+(\frac{1}{2},0) \quad\text{if}\quad x\in \mathring{Q}_{1,1}(1), \\
     x+(-\frac{1}{2},-\frac{1}{2}) \quad\text{if}\quad x\in \mathring{Q}_{1,1}(2),\\
     x+(0,\frac{1}{2}) \quad\text{if}\quad x\in \mathring{Q}_{1,1}(3), \\
     x \quad\text{otherwise}.
     \end{cases}
 \end{equation}
 
 \noindent We put $h_2=4h_1+3$ and we rename the subsquares $Q_{1,1}(i)$ as $Q_{2,1}=Q_{1,1}(1), Q_{2,2}=Q_{1,1}(2), Q_{2,3}=Q_{1,1}(3)$. We look at this via configurations: let us take $\gamma_2\in\mathcal{C}(2)$, for example
 \begin{equation}\label{eq:gamma:2}
     \gamma_2=\left(\begin{matrix} 1 & 2 \\ 3 & 4
     \end{matrix}\right).
 \end{equation}
 Then we can easily represent $U_1(\gamma_2)$ as
 \begin{equation}\label{U:1:gamma}
     U_1(\gamma_2)=\left(\begin{matrix} 3 & 1 \\ 2 & 4\end{matrix}\right).
 \end{equation}
Here we see the advantage of the representation via configurations, which is more immediate.  We continue our construction dividing each subsquare $Q_{2,i}$ with $i\in\lbrace{1,2,3}\rbrace$ into $4$ subsquares of side equal to $\frac{1}{4}$: $Q_{2,i}=Q_{2,i}(1)\cup Q_{2,1}(2)\cup Q_{2,1}(3)\cup Q_{2,i}(4)$ and we divide also $R_1=R_{1}(1)\cup R_{1}(2)\cup R_{1}(3)\cup R_2$ into $4$ subsquares (see Figure \ref{twodsub}). 
 \begin{figure}
     \centering
     \includegraphics[scale=0.5]{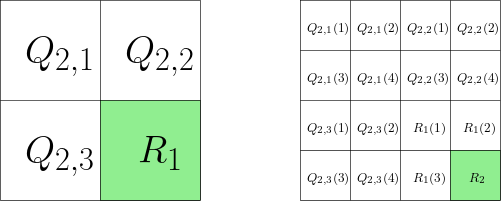}
     \caption{Subdivisions of the two dimensional torus.}
     \label{twodsub}
 \end{figure}
 We define the map $U_2$ in the following way
\begin{equation}
    U_2(x)=\begin{cases}
    U_1(x) \quad\text{if}\quad x\in \mathring{Q}_{2,1}\cup \mathring{Q}_{2,2}, \\
    x +(\frac{1}{4},\frac{1}{2}) \quad\text{if}\quad x\in \mathring{Q}_{2,3}(1), \\
    x +(\frac{1}{4},0) \quad\text{if}\quad x\in \mathring{Q}_{2,3}(2) ,\\
    x +(\frac{3}{4},\frac{1}{4}) \quad\text{if}\quad x\in \mathring{Q}_{2,3}(3), \\
    x +(\frac{1}{4},0) \quad\text{if}\quad x\in \mathring{Q}_{2,3}(4), \\
     x +(-\frac{1}{2},\frac{1}{4})  \quad\text{if}\quad x\in \mathring{R}_1(1), \\
    x +(-\frac{1}{2},\frac{1}{4}) \quad\text{if}\quad x\in \mathring{R}_1(2), \\
    x +(-\frac{1}{2},\frac{3}{4}) \quad\text{if}\quad x\in \mathring{R}_1(3), \\
     x \quad\text{otherwise},
    \end{cases}
\end{equation}
or, via configurations, fixing some $\gamma_4\in\mathcal{C}(4)$ we get
\begin{equation}\label{eq:U_2}
U_2\left(\begin{matrix}
1 & 2 & 5 & 6 \\ 3 & 4 & 7 & 8 \\ 9 & 10 & 13 & 14 \\ 11 & 12 & 15 & 16
\end{matrix}\right)=\left(\begin{matrix}
15 & 9 & 1 & 2 \\ 13 & 14 & 3 & 4 \\ 5 & 6 & 10 & 11 \\ 7 & 8 & 12 & 16
\end{matrix}\right).
\end{equation}

\noindent We put $h_3=4h_2+3=15$ and we rename the squares as \begin{align*}
    Q_{3,1}\doteq Q_{2,1}(1)&\to Q_{3,2}\doteq Q_{2,2}(1)\to Q_{3,3}\doteq Q_{2,3}(1)\to \\ & \to Q_{3,4}=Q_{2,1}(2)\to\dots\to  Q_{3,14}\doteq Q_{2,3}(3)\to Q_{3,15}\doteq R_1(3),
\end{align*}
with the property that if $Q_{3,i}\to Q_{3,i+1}$ then $U_2(Q_{3,i})=Q_{3,i+1}$. \\
 
\noindent The inductive step is the following: at the $n$-th step we have $Q_{n,1},\dots Q_{n,h_{n}}$ subsquares each one of area $\frac{1}{4^{n-1}}$ and side $l_n=\frac{1}{2^{n-1}}$. We divide each subsquare into 4 identical subsquares: $Q_{n,i}=Q_{n,i}(1)\cup Q_{n,i}(2)\cup Q_{n,i}(3)\cup Q_{n,i}(4)$ and we divide also $R_{n-1}$ into 4 identical subsquares $R_{n-1}=R_{n-1}(1)\cup R_{n-1}(2)\cup R_{n-1}(4)\cup R_n$. We put $h_{n+1}=4h_{n}+3=4^n-1$ and we define the map $U_{n}$ in the following way:
\begin{equation}\label{map:U:n:voglio:mori}
    U_{n}(x)=\begin{cases}
    U_{n-1}(x) \text{ if } x\in \mathring{Q}_{n,1}\cup\dots\cup \mathring{Q}_{n,h_n-1}, \\
    x+ (\frac{5}{4^{n-1}}-1,1-\frac{1}{2^{n-1}})\text{ if } x\in \mathring{Q}_{n,h_n}(1), \\
    x+ (\frac{1}{4^{n-1}},0)\text{ if } x\in \mathring{Q}_{n,h_n}(2), \\
    x+ (\frac{3}{4^{n-1}},\frac{1}{4^{n-1}})\text{ if } x\in \mathring{Q}_{n,h_n}(3), \\
    x+ (\frac{1}{4^{n-1}},0)\text{ if } x\in \mathring{Q}_{n,h_n}(4), \\
    x+ (-1+\frac{1}{2^{n-1}},\frac{2^n-3}{4^{n-1}})\text{ if } x\in \mathring{R}_{n-1}(1), \\
    x+ (\frac{1}{2^{n-1}}-1,\frac{2^n-3}{4^{n-1}})\text{ if } x\in \mathring{R}_{n-1}(2), \\
    x+ (\frac{1}{2^{n-1}}-1,\frac{2^n-1}{4^{n-1}})\text{ if } x\in \mathring{R}_{n-1}(3), \\
    x \text{ otherwise }.
    \end{cases}
\end{equation}

\noindent We underline that, by definition, we have that
    \begin{equation}\label{eq:inclusion:maps}
        U_n(x)=U_{n-1}(x) \text{ if } x\in Q_{n,1}\cup\dots\cup Q_{n,h_n-1}.
    \end{equation}
Finally we rename the squares as follows:
\begin{align*}
    Q_{n+1,1}\doteq Q_{n,1}(1)&\to Q_{n+1,2}\doteq Q_{n,2}(1)\to\dots Q_{n+1,h_n}\doteq Q_{n,3}(1)\to \\ & \to\dots\to  Q_{n+1,4h_n+2}\doteq Q_{n,h_n}(3)\to Q_{n+1,h_{n+1}}\doteq R_n(3),
\end{align*}
with the property that if $Q_{n+1,i}\to Q_{n+1,i+1}$ then $U_n(Q_{n+1,i})=Q_{n+1,i+1}$. \\

\noindent finally we define $U=\lim_{n\to\infty} U_n$ (well defined by condition \ref{eq:inclusion:maps}).
Then the following propositions hold:
\begin{proposition}
The map $U$ is measure-preserving and ergodic.
\end{proposition}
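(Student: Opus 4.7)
I would handle the two assertions in turn, the first being essentially tautological from the construction and the second following a classical Rokhlin-tower argument.

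\emph{Measure-preservation.} Inspection of \eqref{map:U:n:voglio:mori} shows that each $U_n$ is a piecewise translation: the $4^{n-1}$ dyadic squares of side $2^{-(n-1)}$ which tile $\T^2$ are permuted by $U_n$ via translations $x\mapsto x+c$. As a piecewise isometric bijection of $\T^2$, each $U_n$ preserves the Lebesgue measure. By \eqref{eq:inclusion:maps} the sequence is stationary on $\bigcup_{i=1}^{h_n-1}Q_{n,i}$, whose complement $Q_{n,h_n}\cup R_{n-1}$ has measure $2\cdot 4^{-(n-1)}\to 0$. Hence $U(x)=\lim_n U_n(x)$ is defined almost everywhere, agrees with a measure-preserving map on a set of measure tending to $1$, and is therefore itself measure-preserving.

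\emph{Ergodicity.} I would observe first that $\{Q_{n,1},\dots,Q_{n,h_n},R_{n-1}\}$ is precisely the dyadic partition of $\T^2$ at scale $2^{-(n-1)}$, and that the renaming step in the construction guarantees $U(Q_{n,i})=Q_{n,i+1}$ for $i=1,\dots,h_n-1$. Let $A$ be an arbitrary Borel set with $U^{-1}(A)=A$. Since $U$ restricts to a measure-preserving bijection $Q_{n,i}\to Q_{n,i+1}$, invariance of $A$ forces
\begin{equation*}
    |A\cap Q_{n,i}|=\alpha_n\cdot 4^{-(n-1)},\qquad i=1,\dots,h_n,
\end{equation*}
for a common density $\alpha_n\in[0,1]$. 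Thus $A$ has the same density in every level of the $n$-th tower.

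Because the dyadic partitions generate $\mathcal{B}(\T^2)$, for every $\varepsilon>0$ and every sufficiently large $n$ there exists $I_n\subset\{1,\dots,h_n\}$ (together with the possible inclusion of $R_{n-1}$) whose union approximates $A$ in symmetric difference within $\varepsilon$. Each level in $I_n$ contributes $(1-\alpha_n)\cdot 4^{-(n-1)}$ and each level outside $I_n$ contributes $\alpha_n\cdot 4^{-(n-1)}$ to the symmetric difference, so
\begin{equation*}
    \min(\alpha_n,1-\alpha_n)\,(1-4^{-(n-1)})\;\leq\;\varepsilon+4^{-(n-1)}.
\end{equation*}
Letting $\varepsilon\to 0$ along a subsequence yields $\min(\alpha_n,1-\alpha_n)\to 0$; combined with $|A|=\alpha_n(1-4^{-(n-1)})+|A\cap R_{n-1}|$ and $|R_{n-1}|\to 0$, we conclude $|A|\in\{0,1\}$.

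The only delicate point is the transfer of invariance from $A$ to an approximating union of levels. This is purely a soft measure-theoretic fact, handled by the density of the dyadic algebra in $\mathcal{B}(\T^2)$ and by absorbing the negligible contribution of $R_{n-1}$. Once this approximation is in place, the rigidity imposed by the tower structure (constant density $\alpha_n$ across levels) makes ergodicity immediate.
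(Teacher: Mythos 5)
Your argument is correct, and it is worth noting that the paper does not actually prove this proposition: it only remarks that the proof \virgolette{relies on the notion of sufficient semi-rings} and defers to the cited reference. Your proof is a self-contained instantiation of exactly that idea. The levels $Q_{n,1},\dots,Q_{n,h_n}$ together with $R_{n-1}$ form the dyadic partition at scale $2^{-(n-1)}$, so they generate $\mathcal{B}(\T^2)$ and approximate every Borel set; this is the \virgolette{sufficiency} of the semi-ring. Your rigidity step (an invariant set has constant density $\alpha_n$ across all $h_n$ levels, because $U=U_{n-1}$ on the first $h_n-1$ levels and $U_{n-1}(Q_{n,i})=Q_{n,i+1}$ by the renaming) is the standard rank-one ergodicity argument, and the final estimate $\min(\alpha_n,1-\alpha_n)(1-4^{-(n-1)})\leq\varepsilon+4^{-(n-1)}$ closes it correctly. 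Two small points deserve care. First, in the measure-preservation step, $U_n$ does not permute the dyadic squares of side $2^{-(n-1)}$: by \eqref{map:U:n:voglio:mori} it splits the top level $Q_{n,h_n}$ and the residual $R_{n-1}$ into quarters and translates those, so the correct statement is that $U_n$ is a piecewise translation permuting the $4^n$ dyadic squares of side $2^{-n}$; this does not affect the conclusion. Second, to use the tower property for the limit map $U$ rather than for $U_{n-1}$ you should note that $Q_{n,1}\cup\dots\cup Q_{n,h_n-1}$ is contained in $Q_{n+1,1}\cup\dots\cup Q_{n+1,h_{n+1}-1}$, so that \eqref{eq:inclusion:maps} propagates and $U=U_{n-1}$ on the first $h_n-1$ levels of the $n$-th tower; your appeal to the renaming step implicitly uses this. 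With those clarifications your proof is complete and arguably more informative than the paper's citation.
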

\noindent The proof of the proposition relies on the notion of \emph{sufficient semi-rings}, that can be taken from \cite{Chen2015THENO}. 
 \begin{proposition}\label{prop:wm:not:sm}
 The map $U$ is a weakly mixing automorphism of $\T^2$ that is not strongly mixing.
 \end{proposition}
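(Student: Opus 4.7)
The plan is to mimic the classical Chacon argument (Theorem~\ref{chacon wm not strongly}) in the two-dimensional column-tower setting. The construction of $U$ is a direct analogue of Chacon's one-dimensional one: at each inductive step the column of height $h_n$ is refined by splitting every level $Q_{n,i}$ into four congruent squares and inserting three spacer pieces from $R_{n-1}$, producing a new column of height $h_{n+1}=4h_n+3$. The ``plus three'' in the recursion (in place of Chacon's ``plus one'') is what drives the partial-rigidity behavior responsible for the failure of strong mixing, while the non-triviality of the spacer block is what kills all non-trivial eigenvalues.

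\emph{Not strongly mixing.} I would fix $A=Q_{n_0,1}$ for some large $n_0$. The key structural claim, to be verified by induction on the refinement scale using the renaming rules together with the explicit shifts in \eqref{map:U:n:voglio:mori}, is a partial-rigidity property along a specific subsequence of iterates: there exist $m_k\to\infty$ and a constant $c>0$ (independent of $n_0$) such that
\begin{equation*}
|U^{-m_k}(A)\cap A|\ \geq\ c\,|A|\qquad\text{for every }k\geq 1.
\end{equation*}
Heuristically, three out of every four scale-$(n_0+k)$ refinement pieces of $A$ return into $A$ under an iterate tied to the column height $h_{n_0+k}$ (the fourth piece being deflected by the spacer at the top of the column), so a definite positive fraction of $A$ always comes back. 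Choosing $n_0$ so large that $|A|<c$, the sequence $|U^{-m_k}(A)\cap A|$ cannot converge to $|A|^2$, contradicting strong mixing.

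\emph{Weakly mixing.} By Theorem~\ref{mixing theorem} it suffices to rule out non-constant eigenfunctions of the Koopman operator. Assume $f\in L^2(\T^2)$ with $f\circ U=\lambda f$ and $|\lambda|=1$, and let $f_n=\mathbb{E}[f\mid\mathcal F_n]$ where $\mathcal F_n$ is the $\sigma$-algebra generated by the scale-$n$ column partition together with $R_{n-1}$. Since $\mathcal F_n$ increases to the Borel $\sigma$-algebra of $\T^2$, $f_n\to f$ in $L^2$. The eigenvalue relation combined with $U(Q_{n,i})=Q_{n,i+1}$ on the safe part of the column forces $f_n$ to behave as a geometric progression with ratio $\lambda$ along the column, and the inductive averaging across the four successors of each $Q_{n,i}$ (whose new indices differ consecutively by $h_n+1$) together with the constraints imposed by the spacer images yields compatibility equations whose only unit-modulus solution is $\lambda=1$. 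Ergodicity of $U$ (the preceding proposition) then implies $f$ is constant a.e.

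The main obstacle is the combinatorial-geometric bookkeeping in Step~1: identifying the exact subsequence $m_k$, following by induction how the pieces of $A$ are distributed through the column under the renaming, and verifying that the chosen trajectories remain in the safe region where $U$ coincides with $U_{n_0+k}$. This is the 2D counterpart of Chacon's original count, made more delicate by the two-dimensional geometry of the shifts in \eqref{map:U:n:voglio:mori} and by the presence of three spacers per inductive step. Step~2, by contrast, is a fairly mechanical adaptation of Chacon's spectral argument once the column-plus-spacer recursion is made explicit at the level of conditional expectations.
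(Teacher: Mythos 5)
Your first step (failure of strong mixing) follows the paper's argument in outline and is essentially sound: the paper fixes a level $Q_n=Q_{n,i}$ and shows $|U^{h_k}(Q_n)\cap Q_n|\geq\tfrac14|Q_n|$ for all $k\geq n$, using that $U_n^{h_n}(Q_{n,i}(1))=Q_{n,i}(2)\subset Q_{n,i}$ and that $Q_n$ splits into $4^{k-n}$ levels of the $k$-th column, each of which returns a quarter of itself under $U^{h_k}$. Your count is off --- it is \emph{one} of the four refinement pieces that returns (the other three land in $Q_{n,i-1}$), so the constant is $c=\tfrac14$ rather than three quarters --- but this does not affect the structure of the argument, and choosing $|A|<\tfrac14$ finishes exactly as you say.

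The genuine gap is in the weak-mixing step. Your conditional-expectation scheme never identifies the mechanism that actually forces $\lambda=1$; the phrase \virgolette{compatibility equations whose only unit-modulus solution is $\lambda=1$} is precisely the assertion that needs proof. The heart of Chacon's argument, which the paper reproduces, is that the same piece $Q_{n,i}(1)$ returns to $Q=Q_{n,i}$ under \emph{two} different iterates: $U^{h_n}(Q_{n,i}(1))=Q_{n,i}(2)$ and $U^{2h_n+1}(Q_{n,i}(1))=Q_{n,i}(3)$, the extra $+1$ coming from the spacer inserted between the second and third copies of the column. Taking $A=\lbrace x:|f(x)-k|<\epsilon\rbrace$ of density $>\tfrac78$ in some $Q_{n,i}$, one finds points of $A\cap Q$ that return to $A\cap Q$ at both times, whence $\lambda^{h_n}=\frac{k+\delta_2}{k+\delta_1}$ and $\lambda^{2h_n+1}=\frac{k+\delta_4}{k+\delta_3}$ with $|\delta_i|<\epsilon$; the combination $\lambda=\lambda^{2h_n+1}\bigl(\lambda^{h_n}\bigr)^{-2}$ then yields $\lambda=1$ as $\epsilon\to0$. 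Knowing only that $\lambda^{h_n}\to 1$ along the column heights --- which is all that a single return time, or a geometric-progression ansatz along the column, gives you --- is \emph{not} sufficient: it does not exclude roots of unity or other unimodular $\lambda$ compatible with the sequence $h_n$. Until your compatibility equations explicitly encode the pair of return times $h_n$ and $2h_n+1$ (or an equivalent arithmetic combination isolating $\lambda$ itself), the weak-mixing half is not established.
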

 \begin{proof}
 The idea of the proof is the same of Chacon (see   \cite{Weaklymixingnotstrongly}). Indeed: take for example a subsquare $Q_n=Q_{n,i}$ with $n$ sufficiently large. We will prove that 
 \begin{equation}
     |U^{h_k}(Q_n)\cap Q_n|= \frac{1}{4} |Q_n|> |Q_n|^2, \quad\forall k\geq n.
 \end{equation}
 We first recall that $U=\lim_{n\to\infty} U_n$ and that $U_{n+1}=U_{n}$ on $Q_{n+1,1}\cup Q_{n+1,2}\cup \dots\cup Q_{n+1,h_{n+1}-1}$. 
 We remember that $Q_n=Q_{n,i}(1)\cup Q_{n,i}(2)\cup Q_{n,i}(3)\cup Q_{n,i}(4)$, so
 \begin{align*}&U_n^{h_n}(Q_{n,i}(1))=Q_{n,i}(2), \\ & U_n^{h_n}(Q_{n,i}(2))=Q_{n,i-1}(3), \\ & U_n^{h_n}(Q_{n,i}(3))=Q_{n,i-1}(4), \\ & U_n^{h_n}(Q_{n,i}(4))=Q_{n,i-1}(1).
 \end{align*}
 This implies that
 \begin{equation*}
     | U^{h_n}_n(Q_n)\cap Q_n|=\frac{1}{4}|Q_n|,
 \end{equation*}
 for any $Q_n$ sublevel of the $n$-th column. Since \eqref{eq:inclusion:maps} holds and also $U_n^{h_n}({Q_{n,i}(1)})=Q_{n,i}(2)$, one has
 \begin{equation*}
     |U^{h_n}(Q)\cap Q|\geq\frac{1}{4}|Q|.
 \end{equation*}
 In particular, if one considers $k\geq n$ and takes a sublevel $Q_k$ of the $k$-th column, one gets similarly
 \begin{equation*}
     |U^{h_k}(Q_k)\cap Q_k|\geq\frac{1}{4}|Q_k|.
 \end{equation*}
 We observe now that any level $Q_n$ of the $n$-th column has $4^{k-n}$ copies into the $k$-th column. For example, if $k=n+1$ one gets 
 $Q_n=Q_{n+1}^{1}\cup Q_{n+1}^{2}\cup Q_{n+1}^{3}\cup Q_{n+1}^{4}$. But then
 \begin{equation*}
     |U^{h_{n+1}}(Q_n)\cap Q_n|=\sum_{j=1}^4|U^{h_{n+1}}(Q_{n+1}^j)\cap Q_n|\geq \sum_{j=1}^4|U^{h_{n+1}}(Q_{n+1}^j)\cap Q_{n+1}^j|\geq\sum_{j=1}^4\frac{1}{4}|Q_{n+1}^j|=\frac{1}{4}|Q|.
 \end{equation*}
 More in general
  \begin{equation*}
     |U^{h_{k}}(Q_n)\cap Q_n|=\sum_{Q'\text{ copies }}|U^{h_{k}}(Q')\cap Q_n|\geq \sum_{Q'\text{ copies }}|U^{h_{k}}(Q')\cap  Q'|\geq\sum_{Q'\text{ copies }}\frac{1}{4}|Q'|=\frac{1}{4}|Q|.
 \end{equation*}
  This gives a diverging sequence $\lbrace k\rbrace$ and a set $Q_n$ for which the strongly mixing condition does not hold (if one simply requires that $|Q_n|<\frac{1}{4}$),
 therefore $U$ cannot be strongly mixing (see Definition \ref{mix}).
 Whereas $U$ is weakly mixing. Indeed, by the previous proposition, $U$ is measure-preserving and ergodic. By Theorem \ref{mixing theorem} we have to prove that if $f\in\L^2(\T^2)$ is an eigenfunction of $\mathcal{U}_U$ of eigenvalue $\lambda$, with $|\lambda|=1$, then $f=c$ for some constant $c$. By Lemma $2$ of \cite{Ergodic:theory} (chapter 1, page 14), we need only to prove that $\lambda=1$. Since $f$ is non-zero on a set of positive measure, for every $\epsilon>0$ there exists a constant $k$ such that the set
$$ A=\lbrace x: |f(x)-k|<\epsilon\rbrace $$ has positive measure. By the property of sufficient semi-rings (see again \cite{Chen2015THENO}) there exists a subsquare $Q=Q_{n,i}$ such that 
\begin{equation}\label{density}
    \frac{|Q\cap A|}{|Q|}> \frac{7}{8}.
\end{equation}
As in the previous computations, since $Q=Q_{n,i}(1)\cup Q_{n,i}(2)\cup Q_{n,i}(3)\cup Q_{n,i}(4)$, using that $U$ is measure-preserving, $U^{h_n}(Q_{n,i}(1))=Q_{n,i}(2)$, $U^{2h_n+1}(Q_{n,i}(1))=Q_{n,i}(3)$ and that the inequality \eqref{density} holds, we obtain that there exists a point $x\in A\cap Q$ such that $U^{h_n}(x)\in A\cap Q$, and that there exists $y\in A\cap Q$ such that $U^{2h_n+1}(y)\in A\cap Q$. Indeed, by \eqref{density} we have that $|Q_{n,i}(2)\cap A|>\frac{1}{8}|Q|$ and for the same reason that
$$ |Q_{n,i}(1)\cap A|>\frac{1}{8}|Q|, $$ therefore, by applying the map $U^{h_n}$ one gets
$$ |Q_{n,i}(2)\cap U^{h_n}(A)|>\frac{1}{8}|Q| ,$$ which implies that $$ |(Q\cap A)\cap U^{h_n}(Q\cap A)|>0.$$ An analogous argument proves that $$ |(Q\cap A)\cap U^{2h_n+1}(Q\cap A)|>0.$$
Therefore 
\begin{equation}
    |f(x)-k|<\epsilon, \quad |\lambda^{h_n}f(x)-k|<\epsilon, \quad |f(y)-k|<\epsilon, \quad
    |\lambda^{2h_n+1}f(y)-k|<\epsilon.
\end{equation}
So by the previous equations we have that
\begin{equation*}
    \lambda^{h_n}= \frac{k+\delta_2}{k+\delta_1}, \quad \lambda^{2h_n+1}=\frac{k+\delta_4}{k+\delta_3}
\end{equation*}
with $|\delta_i|<\epsilon$ for every $i$. That is
\begin{equation}
    \lambda=\frac{(k+\delta_4)(k+\delta_1)^2}{(k+\delta_3)(k+\delta_2)^2},
\end{equation}
and since this hold for every $\epsilon>0$, we conclude that $\lambda=1$. 
 \end{proof}
 
 \noindent The maps $U_n$ can be decomposed into simple movements, using the notations of configurations.  \\
 
 \noindent
 We have the following
 \begin{proposition}\label{prop:che:palle} For every $n\geq 2$ the map $U_n$ can be expressed as 
 \begin{equation}\label{eq:recursive:map}
     U_{n}=U_{n-1}\circ V_{n},
 \end{equation}
 where $V_{n}:\mathcal{C}(2^{n})\rightarrow\mathcal{C}(2^{n})$ and
 \begin{align}\label{V:n}
\notag V_{n}= & E_s(2^{n},2^{n}-3;2^{n},2^{n}-2)\circ R^+(2^{n},2^{n}-3)\circ  R^+(2^{n},2^{n}-3)\circ \\& \circ S_c^2(2^{n}-1;2^{n}-3,2^{n})\circ R^-(2^{n},2^{n}-3)\circ S_c(2^{n};2^{n}-3,2^{n}-1).
 \end{align}
\end{proposition}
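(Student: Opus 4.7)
The plan is to verify \eqref{eq:recursive:map} as an equality of permutations of the $4^n$ small subsquares of side $1/2^n$, i.e.~as an equality of elements of the symmetric group on $\mathcal{C}(2^n)$. The map $U_{n-1}$, originally defined on the grid of side $1/2^{n-1}$, descends to $\mathcal{C}(2^n)$ because by \eqref{map:U:n:voglio:mori} it acts by rigid translation on each large subsquare $Q_{n-1,k}$ by a vector in $\tfrac{1}{2^{n-1}}\Z^2$, and hence permutes the small subsquares of side $1/2^n$ in a block-wise fashion (each large cell being a $2\times 2$ sub-block of $\mathcal{C}(2^n)$).

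First I would observe that each of the six factors composing $V_n$ in \eqref{V:n} has all of its row and column indices in $\{2^n-1, 2^n\}$ and $\{2^n-3,2^n-2,2^n-1,2^n\}$, respectively, so that $V_n$ is the identity on every entry outside the bottom-right $2 \times 4$ block of $\mathcal{C}(2^n)$ and restricts to a permutation of that block. Under the identification between entries of $\mathcal{C}(2^n)$ and small subsquares of the torus, this block consists precisely of the four small subsquares of $Q_{n,h_n}$ together with the four small subsquares of $R_{n-1}$, while its complement corresponds to the small subsquares contained in $Q_{n,1}\cup\dots\cup Q_{n,h_n-1}$. Combining this with \eqref{eq:inclusion:maps} immediately yields $(U_{n-1}\circ V_n)(x) = U_{n-1}(x) = U_n(x)$ for every small subsquare $x$ outside the block, so the verification reduces to the action on the eight remaining cells.

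Second, on the $2\times 4$ block the identity is a finite calculation. I would fix the labelling of the eight entries dictated by the subdivisions in Figure \ref{twodsub}, write the starting configuration as an explicit $2\times 4$ table, and apply the six movements in the order prescribed by \eqref{V:n} (reading the composition right to left), producing a $2\times 4$ table after each step. Each movement rearranges at most four of the eight cells, so the intermediate tables can be written down mechanically. One then applies $U_{n-1}$, which, because each of the two $2\times 2$ sub-blocks making up the bottom-right block lies inside a single large cell, acts on the resulting table by translating the two sub-blocks rigidly according to the inductive action of $U_{n-1}$. The outcome must be matched, cell by cell, against the images of the same eight small subsquares computed directly from the closed-form formula \eqref{map:U:n:voglio:mori}.

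The main obstacle is purely this bookkeeping: tracking the migration of the eight labels across the initial sort on row $2^n$, the counterclockwise rotation, the double sort on row $2^n-1$, the two clockwise rotations and the final adjacent exchange, and then checking that the post-composition with the block translation $U_{n-1}$ produces exactly the pattern prescribed by \eqref{map:U:n:voglio:mori}. No conceptual difficulty arises beyond this verification; the content of the proposition is precisely that the new part of the inductive step of $U_n$, namely the rearrangement of the eight small subsquares in the bottom-right region, factors as a short composition of the elementary movements introduced in Section \ref{subS_rows_columns}, each realizable by a divergence-free $\BV$ vector field with the explicit $L^\infty$ and total-variation bounds \eqref{est:simple:exch}, \eqref{est:sort:2} and \eqref{est:rot}, which will be the decisive ingredient in Section \ref{S:wmvf}.
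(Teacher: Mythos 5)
Your proposal is correct and follows essentially the same route as the paper: both reduce the identity to the eight small subsquares refining $Q_{n,h_n}\cup R_{n-1}$ (using \eqref{eq:inclusion:maps} and the fact that $V_n$ is supported on that $2\times 4$ block), and then verify the composition with $U_{n-1}$ by a finite, movement-by-movement bookkeeping of the eight labels, exactly as the paper does via its explicit table computation and the tracking in \eqref{eq:U:n}.
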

\begin{proof}
 For understanding the situation we first fix $n=1$, and the starting configuration
 \begin{equation*}
     \gamma=\left(\begin{matrix}1 & 2 & 5 & 6 \\ 3 &4 & 7 & 8 \\ 9 & 10 & 13 & 14 \\ 11 &12 &15 &16\end{matrix}\right),
 \end{equation*}
 We first apply $V_1$ finding
 \begin{align*}
     & \left(\begin{matrix}1 & 2 & 5 & 6 \\ 3 &4 & 7 & 8 \\ 9 & 10 & 13 & 14 \\ 11 &12 &15 &16\end{matrix}\right)\stackrel{S_c(4;1,3)}{\longrightarrow} \left(\begin{matrix}1 & 2 & 5 & 6 \\ 3 &4 & 7 & 8 \\ 9 & 10 & 13 & 14 \\ 15 &11 &12 &16\end{matrix}\right)\stackrel{R^-(4,1)}{\longrightarrow}\left(\begin{matrix}1 & 2 & 5 & 6 \\ 3 &4 & 7 & 8 \\ 10 & 11 & 13 & 14 \\ 9 &15 &12 &16\end{matrix}\right)\stackrel{S_c^2(3;1,4)}{\longrightarrow} \\& \notag \\ &
     \left(\begin{matrix}1 & 2 & 5 & 6 \\ 3 &4 & 7 & 8 \\ 13 & 14 & 10 & 11 \\ 9 &15 &12 &16\end{matrix}\right)\stackrel{(R^+(4,1))^2}{\longrightarrow}\left(\begin{matrix}1 & 2 & 5 & 6 \\ 3 &4 & 7 & 8 \\ 15 & 9 & 10 & 11 \\ 14 &13 &12 &16\end{matrix}\right)\stackrel{E_s(4,1;1,2)}{\longrightarrow}\left(\begin{matrix}1 & 2 & 5 & 6 \\ 3 &4 & 7 & 8 \\ 15 & 9 & 10 & 11 \\ 13 &14 &12 &16\end{matrix}\right).
 \end{align*}
 Then by applying $U_1$ to the last configuration
 \begin{equation*}
     \left(\begin{matrix}1 & 2 & 5 & 6 \\ 3 &4 & 7 & 8 \\ 15 & 9 & 10 & 11 \\ 13 &14 &12 &16\end{matrix}\right)\stackrel{U_1}{\longrightarrow}\left(\begin{matrix}15 & 9 & 1 & 2 \\  13&14 & 3 & 4 \\ 5 & 6 & 10 & 11 \\ 7 &8 &12 &16\end{matrix}\right),
 \end{equation*}
 to compare with \eqref{eq:U_2}. For a generic $n\in\N$, one has to observe that, recalling \eqref{eq:inclusion:maps}, one has
  \begin{equation}\label{eq:squares}
        U_{n}(x)=U_{n-1}(x) \text{ if } x\in Q_{n,1}\cup\dots\cup Q_{n,h_{n}-1}.
    \end{equation}
 with $h_{n}=4^{n-1}-1$.
 We list a series of useful observations:
 \begin{itemize}
     \item consider a configuration $\gamma\in\mathcal{C}(2^{n})$, then the map $V_n$ acts only on the subsquares 
     \begin{align}\label{series:squares}
         Q_{n+1,h_n}, Q_{n+1,2h_n}, Q_{n+1,2h_n+1},Q_{n+1,3h_n+1}, Q_{n+1,3h_n+2},Q_{n+1,4h_n+2},Q_{n+1,4h_{n}+3},R_{n} ;
     \end{align}
     \item $Q_{n,1}\cup\dots\cup Q_{n,h_n-1}$ in equation \ref{eq:squares} correspond to the following subsquares of the refined grid
 \begin{align*}
     & Q_{n+1,1},Q_{n+1,2},\dots,Q_{n+1,h_n-1}, Q_{n+1,h_n+1},Q_{n+1,h_n+2},\dots, Q_{n+1,2h_n-1}, \\ & \notag \\ & Q_{n+1,2h_n+2},Q_{n+1,2h_n+3},\dots, Q_{n+1,3h_n-1},Q_{n+1,3h_n},\dots Q_{n+1,3h_n+3},\\& \notag \\& Q_{n+1,3h_n+4},\dots Q_{n+1,4h_n+1};
    \end{align*}
 \item therefore we have to check $U_{n-1}\circ V_n$ only on the subsquares of \eqref{series:squares};
 \item we recall that, by the enumeration chosen,
 $U_n(Q_{n+1,i})=Q_{n+1,i+1}$ and $U_n(Q_{n+1,h_{n+1}})=Q_{n+1,1}$;
 \item we recall also that
 \begin{equation}\label{obs}U_{n-1}(Q_{n,h_n})=Q_{n,1}=Q_{n+1,1}\cup Q_{n+1,h_n+1}\cup Q_{n+1,2h_n+2} \cup Q_{n+1,3h_n+3},\end{equation}
 and 
 \begin{equation}
     \label{obs:2}
     U_{n-1}\llcorner R_{n-1}=Id.
 \end{equation}
 That is, since $Q_n=Q_{n+1,h_n}\cup Q_{n+1,2h_n}\cup Q_{n+1,3h_n+1}\cup Q_{n+1,4h_n+2}$, then
 \begin{align}\label{eq:U:n}
 \notag & Q_{n+1,h_n}\stackrel{U_{n-1}}{\rightarrow} Q_{n+1,1}, \\
   & Q_{n+1,2h_n}\stackrel{U_{n-1}}{\rightarrow} Q_{n+1,h_n+1}, \\
  \notag & Q_{n+1,3h_n+1}\stackrel{U_{n-1}}{\rightarrow} Q_{n+1,2h_n+2}, \\
   \notag& Q_{n+1,4h_n+2}\stackrel{U_{n-1}}{\rightarrow} Q_{n+1,3h_n+3}. 
 \end{align}
 \end{itemize} 
 Finally, we consider the adjacent subsquares $Q_{n,h_n}$ and $R_{n-1}$ in Figure 12 and their refinement (equation \ref{series:squares}) on which $V_n$ acts. 
  \begin{figure}[H]
     \centering
     \label{fig:divisions}\includegraphics[scale=0.5]{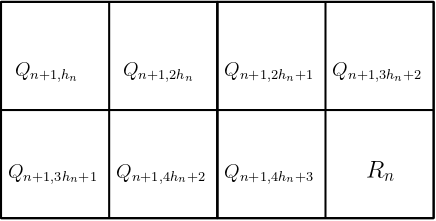}\caption{The adjacent subsquares $Q_{n,h_n}$ and $R_{n-1}$ and their refinement.}
 \end{figure}
\noindent 

\noindent Then the action of $V_n$ is the following
\begin{align*}
& Q_{n+1,h_n}\rightarrow Q_{n+1,2h_n}, \\
& Q_{n+1,2h_n}\rightarrow Q_{n+1,2h_n+1}, \\ 
& Q_{n+1,2h_n+1}\rightarrow Q_{n+1,3h_n+1}, \\
& Q_{n+1,3h_n+1}\rightarrow Q_{n+1,3h_n+2}, \\
& Q_{n+1,3h_n+2}\rightarrow Q_{n+1,4h_n+2}, \\
& Q_{n+1,4h_n+2}\rightarrow Q_{n+1,4h_n+3}, \\
& Q_{n+1,4h_n+3}\rightarrow Q_{n+1,h_n}, \\
& R_n\rightarrow R_n, 
\end{align*}
as described in Figure 13.
\begin{figure}[H]
    \centering
    \includegraphics[scale=0.5]{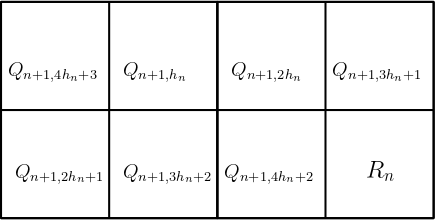}
    \label{fig:action}
    \caption{The action of $V_n$ on the refinement of $Q_{n,h_n}$ and $R_n$.}
\end{figure}
\noindent Thus the map $V_n$ acts only on the sublevels $Q_{n+1,i}$ of the $n+1$-column, not on the sublevels $Q_{n,i}$ of the $n$-th column.
\begin{figure}[H]
    \centering
    \includegraphics[scale=0.5]{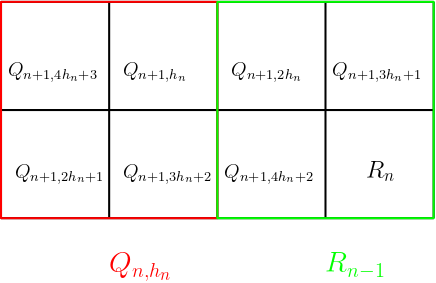}
\end{figure}
\noindent Finally we recall that, by the enumeration chosen,  by using the observation \eqref{obs} and the definition of $U_{n-1}$ in equations \eqref{eq:U:n} one finally has
\begin{equation*}
    Q_{n+1,h_n}\stackrel{V_n}{\longrightarrow}Q_{n+1,2h_n}\stackrel{U_{n-1}}{\longrightarrow}Q_{n+1,h_n+1}=U_{n}(Q_{n+1,h_n}),
\end{equation*}
\begin{equation*}
    Q_{n+1,2h_n}\stackrel{V_n}{\longrightarrow}Q_{n+1,2h_n+1}\stackrel{U_{n-1}}{\longrightarrow}Q_{n+1,2h_n+1}=U_n(Q_{n+1,2h_n}),
\end{equation*}
\begin{equation*}
    Q_{n+1,2h_n+1}\stackrel{V_n}{\longrightarrow}Q_{n+1,3h_n+1}\stackrel{U_{n-1}}{\longrightarrow}Q_{n+1,2h_n+2}=U_n(Q_{n+1,2h_n+1}),
\end{equation*}
\begin{equation*}
    Q_{n+1,3h_n+3}\stackrel{V_n}{\longrightarrow}Q_{n+1,4h_n+2}\stackrel{U_{n-1}}{\longrightarrow}Q_{n+1,3h_n+3}=U_n(Q_{n+1,3h_n+3}),
\end{equation*}
and so on. 
\end{proof}

\noindent We will use similar ideas to recover the flow of the weakly mixing vector field in the next section. 
 \begin{remark}The interested reader can observe that $\forall n$ the map $U_n$ is a cyclic permutation of subsquares of the same area (see \cite{Bianchini_Zizza_residuality} for the definitions). This observation is in the spirit of the result proved by Halmos in \cite{Halmos:weak:mix} that states that weakly mixing automorphisms are a residual $G_\delta$-set in $G(\T^2)$ with the $L^1$-topology. A key ingredient for the proof is to show that cyclic permutations of subsquares are dense in $G(\T^2)$ with the $L^1$-topology. In \cite{Bianchini_Zizza_residuality} similar ideas are used to recover that weakly mixing vector fields are dense with respect to the $L^1_tL^1_x$-topology. In this paper we are doing a different thing: we are considering those automorphisms/vector fields that are weakly mixing but not strongly mixing, fixing a specific cyclic permutation. Therefore we cannot deduce from the previous computations the density of weakly mixing automorphisms. 
 \end{remark}
\section{The weakly mixing vector field}\label{S:wmvf}
In this final section we provide a weakly mixing vector field $b^U\in L^\infty_t\BV_x$ whose RLF $X^U(t)$ evaluated at time $t=1$ is the map $U$ constructed in the previous section. \\

\noindent Our aim is to construct a flow $X^n$ with $n\geq 2$ that, evaluated at time $t=1$, gives the map $V_n$ introduced in \eqref{V:n}. We first consider $\gamma_2\in \mathcal{C}(2)$: we can take to fix the ideas the configuration \eqref{eq:gamma:2}. We start defining the following flow $X^1:[0,1]\times\T^2\rightarrow\T^2$ as
\begin{equation}
    X^1(t)=\begin{cases}
    T(2t)(Q_{2,1},Q_{2,2}),\quad t\in[0,\frac{1}{2}], \\
    T(2t-1)(Q_{2,1},Q_{2,3})\circ T(1)(Q_{2,1},Q_{2,2}),\quad t\in[\frac{1}{2},1].
    \end{cases}
\end{equation}
where the $Q_{2,i}$ $i=1,2,3$ are the subsquares defined in the previous section. One can easily see that $X^1(1)(\gamma_2)=U_1(\gamma_2)$.  \\

\noindent \textbf{Building block flow.} Let us consider the two adjacent subsquares $Q_{n,h_n},R_{n-1}$ of side $\frac{1}{2^n}$ for some $n\in\N$ and consider their subdivisions into the sublevels $Q_{n+1,i}$ as in Proposition \ref{prop:che:palle}.   We look for a flow $X^n(Q_{n,h_n},R_{n-1})[0,1]\times\T^2\rightarrow\T^2$ that moves the subsquares within the time interval $[0,1]$ with the property that $X^n(Q_{n,h_n},R_{n-1})(1)=V_n$. We define therefore $X^n(Q_{n,h_n},R_{n-1})(t)$ as
\begin{equation}
    X^n(Q_{n,h_n},R_{n-1})(t)=\begin{cases}
    X^{S_c}(2^{n+1};2^{n+1}-3,2^{n+1}-1)(7t) \quad t\in[0,\frac{1}{7}],  \\
    \notag \\
    X^{-}(2^{n+1},2^{n+1}-3)(7t-1)\circ X^n(Q_{n,h_n},R_{n-1})(\frac{1}{7}) \quad t\in[\frac{1}{7},\frac{2}{7}], \\
    \notag \\
     X^{S_c}(2^{n+1}-1;2^{n+1}-3,2^{n+1})(7t-2)\circ X^n(Q_{n,h_n},R_{n-1})(\frac{2}{7}) \quad t\in[\frac{2}{7},\frac{3}{7}], \\
     \notag \\
     X^{S_c}(2^{n+1}-1;2^{n+1}-3,2^{n+1})(7t-3)\circ X^n(Q_{n,h_n},R_{n-1})(\frac{3}{7})\quad t\in[\frac{3}{7},\frac{4}{7}], \\
     \notag \\
    X^{+}(2^{n+1},2^{n+1}-3)(7t-4)\circ X^n(Q_{n,h_n},R_{n-1})(\frac{4}{7}) \quad t\in[\frac{4}{7},\frac{5}{7}], \\
    \notag \\
     X^{+}(2^{n+1},2^{n+1}-3)(7t-5)\circ X^n(Q_{n,h_n},R_{n-1})(\frac{5}{7}) \quad t\in[\frac{5}{7},\frac{6}{7}], \\ 
     \notag \\
      X(2^{n+1},2^{n+1}-3; 2^{n+1},2^{n+1}-2)(7t-6)\circ X^n(Q_{n,h_n},R_{n-1})(\frac{6}{7}) \quad t\in[\frac{6}{7},1].
    \end{cases}
\end{equation}
\medskip

\noindent Let us call 
 \begin{equation*}
     b^n(Q_{n,h_n},R_{n-1})(t,x)\doteq\dot{X}^n(Q_{n,h_n},R_{n-1})(t,(X^n)^{-1}(t,x)).
 \end{equation*}
 Then we have the following
\begin{proposition}
 There exist two positive constants $C_1,C_2>0$ such that the following estimates hold:
 \begin{equation}\label{eq:first:estimate}
     \|\dot{X}^n\|_{L^\infty_t L^\infty_x}\leq \frac{C_1}{2^n},
 \end{equation}
 \begin{equation}\label{eq:second:estimate}
     \|TV(b^n)(\T^2)\|_{L^\infty}\leq \frac{C_2}{2^{2n}}.
 \end{equation}
 \end{proposition}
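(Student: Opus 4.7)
The plan is to exploit the explicit piecewise-in-time construction of $X^n(Q_{n,h_n},R_{n-1})$ as a concatenation of seven building-block flows, each rescaled to occupy a time interval of length $1/7$. On any such interval $[\ell/7,(\ell+1)/7]$ with $\ell\in\{0,\dots,6\}$, we have $X^n(t) = Y_\ell(7t-\ell)\circ X^n(\ell/7)$, where $Y_\ell$ is one of the sort, rotation, or simple-exchange flows appearing in the definition of $X^n$. Differentiating in $t$ gives $\dot X^n(t)(z)=7\,\dot Y_\ell(7t-\ell)\bigl(X^n(\ell/7)(z)\bigr)$, and on composing with $(X^n(t))^{-1}$ the factor $X^n(\ell/7)$ cancels against $Y_\ell(7t-\ell)^{-1}$, leaving the pointwise identity
\begin{equation*}
b^n(t,x) \;=\; 7\,b^{Y_\ell}(7t-\ell,x), \qquad t\in[\ell/7,(\ell+1)/7].
\end{equation*}
In particular, for every $t$ the Eulerian profile of $b^n(t,\cdot)$ is literally seven times that of the active building block, so $\TV(b^n(t))(\T^2)$ scales by exactly $7$; and since $X^n(\ell/7)$ is a bijection, also $\|\dot X^n(t)\|_\infty = 7\,\|\dot Y_\ell(7t-\ell)\|_\infty$.

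It then remains to feed in the building-block estimates proved in Subsection~\ref{Ss:vf}. In each of the seven pieces the relevant grid size is $2^{n+1}$, and the indices of the sort flows involved differ by at most $3$. Plugging these values into \eqref{est:simple:exch}, \eqref{est:sort:1}--\eqref{est:sort:2}, \eqref{est:rot}, and the consolidated bounds \eqref{eq:est:flow:final}--\eqref{eq:est:flow:final:TV}, one obtains, uniformly in $\ell$,
\begin{equation*}
\|\dot Y_\ell\|_{L^\infty_{t,x}} \;\le\; \frac{C'}{2^n}, \qquad \|\TV(b^{Y_\ell})(\T^2)\|_{L^\infty_t} \;\le\; \frac{C''}{2^{2n}},
\end{equation*}
with explicit constants $C',C''$ that can be read off directly from those inequalities. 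Multiplying by the time-rescaling factor $7$ and taking the maximum over the seven pieces yields \eqref{eq:first:estimate} and \eqref{eq:second:estimate} with $C_1=7C'$, $C_2=7C''$.

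The only step requiring genuine care is the cancellation argument producing the pointwise identity for $b^n$: this is what makes the bound independent of the pre-composed map $X^n(\ell/7)$, which itself is a long composition accumulating along the induction. The essential observation is that the Eulerian vector field depends only on particle velocities at the current spatial position, so a fixed measure-preserving pre-composition shifts Lagrangian labels but leaves $b^n(t,\cdot)$ unchanged both in $L^\infty$ and in $\BV$ norm. Once this algebraic point is settled, the proof reduces to a finite maximum over three explicit families of building blocks, and no further analytic obstacle is expected.
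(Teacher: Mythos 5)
Your argument is correct and follows essentially the same route as the paper's proof: on each of the seven time subintervals the velocity picks up the rescaling factor $7$, and the building-block bounds \eqref{eq:est:flow:final} and \eqref{eq:est:flow:final:TV} with grid size $2^{n+1}$ and $j'-j\le 3$ give the $2^{-n}$ and $2^{-2n}$ decay. The only difference is that you spell out the cancellation of the pre-composed map $X^n(\ell/7)$ explicitly, which the paper leaves implicit.
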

 
 \begin{proof}
 Let us fix $t\in[0,1]$, then by \eqref{eq:est:flow:final}, since the sort operation occurs for a maximum of $4$ adjacent subsquares, that is $j-j'=3$, we have that
 \begin{equation*}
     \|\dot{X}^n\|_{L^\infty_tL^\infty_x}\leq \frac{7\cdot 24}{2^n}=\frac{C_1}{2^n}.
 \end{equation*}
 To prove the second estimate we observe that, by \eqref{eq:est:flow:final:TV}, since again the sort operation occurs between at most $4$ squares, one has
 \begin{equation}
     |\TV(b^n)(t)(\T^2)|\leq 7\cdot\frac{240}{2^{2n}}=\frac{C_2}{2^{2n}},
 \end{equation}
 which concludes the proof.
 \end{proof}

\noindent Finally we can define the flow $X^U(t)$ as 
\begin{equation}
    X^U(t)=\begin{cases}X^1(2t-1) \circ X^U(\frac{1}{2})\quad t\in[\frac{1}{2},1],\\
   
    X^2(Q_{2,3},R_1)(4t-2)\circ X^U(\frac{1}{4}) \quad t\in[\frac{1}{4},\frac{1}{2}], \\
    \dots \\
    X^n(Q_{n,4h_{n}},R_{n-1})(2^nt-2^n+2)\circ X^U(2^{-n}) \quad t\in [2^{-n},2^{-n+1}], \\
    \dots\quad .
    \end{cases}
\end{equation}
One can observe that the map $X^U:[0,1]\times\T^2\rightarrow\T^2$ is well-defined, $X^U\in C([0,1];L^1(\T^2))$ and differentiable $\L^1$-a.e. $t$ and for every $t\in[0,1]$ it is an invertible and measure-preserving map from the torus into itself. We also remark that for every $x\in\T^2$ we have that $\lim_{t\to 0} X^U(t,x)=x$, which tells us that $X^U$ is a flow. \\

\noindent Therefore, by \eqref{vect:field:ass} one can define the divergence-free vector field $b^U(t,x)\doteq \dot{X}^U(t,(X^U)^{-1}(t,x))$.
\begin{proposition}
The divergence-free vector field $b^U$ lives in the space $ L^\infty_t\BV_x$. Moreover, its RLF $X^U(1)$ when evaluated at time $t=1$ is the map $U$, that is $b^U$ is a weakly mixing vector field that is not strongly mixing.
\end{proposition}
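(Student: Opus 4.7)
The plan is to split the statement into three parts: (a) the regularity $b^U\in L^\infty_t\BV_x$ together with the fact that $X^U$ is the unique RLF of $b^U$; (b) the identification $X^U(1)=U$ almost everywhere; (c) the mixing conclusion. Part (c) will be immediate from Proposition~\ref{prop:wm:not:sm} combined with Definition~\ref{def:mix:vect:fields}, so all the work lies in (a) and (b).

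For (a) I would argue by time rescaling. On the slab $[2^{-n},2^{-n+1}]$ of length $2^{-n}$, the flow $X^U$ is a linear reparametrization of the building block $X^n$ precomposed with the measure-preserving automorphism $X^U(2^{-n})$; the rescaling factor $2^n$ in time multiplies both the velocity and the spatial total variation of the associated vector field by $2^n$ via the chain rule and \eqref{vect:field:ass}. Inserting the estimates \eqref{eq:first:estimate} and \eqref{eq:second:estimate} then yields the uniform bounds
\begin{equation*}
\|b^U(t)\|_{L^\infty_x}\leq 2^n\cdot\frac{C_1}{2^n}=C_1,\qquad \TV(b^U(t))(\T^2)\leq 2^n\cdot\frac{C_2}{2^{2n}}=\frac{C_2}{2^n},\qquad t\in[2^{-n},2^{-n+1}],
\end{equation*}
from which $b^U\in L^\infty_t L^\infty_x\cap L^\infty_t\BV_x$ follows directly. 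Divergence-freeness is automatic since each building block is a finite composition of divergence-free rotation, sort, shift and transposition flows, and measure preservation at every time is likewise inherited piecewise. The map $X^U$ is continuous in $t$, absolutely continuous in $t$ for a.e. $x$ with velocity $b^U\circ X^U$ by construction, and the Ambrosio uniqueness theorem then forces $X^U$ to coincide with the unique RLF of $b^U$.

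For (b) I would exploit the recursive definition of $X^U$ together with Proposition~\ref{prop:che:palle} to write the telescoping identity
\begin{equation*}
X^U(1)=U_1\circ V_2\circ\dots\circ V_n\circ X^U(2^{-n})=U_n\circ X^U(2^{-n}),\qquad\forall n\in\N.
\end{equation*}
The uniform $L^\infty$ bound above gives $|X^U(2^{-n},x)-x|\leq C_1\cdot 2^{-n}$ for a.e. $x$, so $X^U(2^{-n})\to\Id$ uniformly. Passing to the limit requires some care because the limit $U$ is only piecewise continuous: I would fix a generic $x$ belonging to the interior of some $Q_{m,i}$ with $i\leq h_m-1$ (the complement of such points has zero measure, since $|R_n|\to 0$), and use \eqref{eq:inclusion:maps} to note that on the open neighborhood $\mathring{Q}_{m,i}$ of $x$ all the maps $U_n$ with $n\geq m$ coincide with a single translation equal to $U$. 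For $n$ so large that $C_1 2^{-n}$ is smaller than the side length $2^{-m+1}$ of $Q_{m,i}$, continuity of the translation then gives $U_n(X^U(2^{-n})(x))\to U(x)$, completing the identification.

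The main obstacle I foresee is precisely the exchange of limits in (b): the pointwise limit $U$ is not continuous on all of $\T^2$, so uniform convergence cannot be invoked globally, and one must keep the argument localized to the interior of some dyadic subsquare where the family $U_n$ stabilizes in the sense of \eqref{eq:inclusion:maps}. The bookkeeping of displacement vs.\ grid side length is the only slightly technical step; everything else reduces to the quantitative estimates already produced for the building block and to invoking the results of Sections~\ref{S:twod} and \ref{subS_rows_columns}.
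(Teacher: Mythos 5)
Your proposal is correct and follows essentially the same route as the paper: the time-rescaling argument on each slab $[2^{-n},2^{-n+1}]$ combined with the building-block estimates \eqref{eq:first:estimate}--\eqref{eq:second:estimate} for the $L^\infty_t\BV_x$ bounds, and the telescoping identity through Proposition \ref{prop:che:palle} for the identification $X^U(1)=U$. The only differences are cosmetic: you bound $\|b^U(t)\|_{L^\infty_x}$ where the paper bounds $\|b^U(t)\|_{L^1_x}$, and your careful localization of the limit $U_n\circ X^U(2^{-n})\to U$ to the interiors of stabilized dyadic squares spells out a passage that the paper dismisses as following \virgolette{easily} from Proposition \ref{prop:che:palle}.
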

\begin{proof}
Let us fix $t\in[2^{-n},2^{-n+1}]$. Then
\begin{equation*}
    b^U(t,x)= 2^n \dot{X}^n(Q_{n,4h_{n-1}+3},R_{n-1})(2^nt-2^n+2,(X^U)^{-1}(t,x)).
\end{equation*}
Therefore
\begin{align}
    \notag \|b^U(t)\|_{L^1_x} &=  2^n \int_{\T^2}|\dot{X}^n(Q_{n,4h_{n-1}+3},R_{n-1})(2^nt-2^n+2,(X^U)^{-1}(t,x))|dx \\ \label{eq:1}
    & =2^n\int_{\T^2}|\dot{X}^n(Q_{n,4h_{n-1}+3},R_{n-1})(2^nt-2^n+2,y)|dy \\ & \label{eq:2} \leq 2^n|Q_{n,4h_{n-1}+3}\cup R_{n-1}| \|\dot{X}^n(Q_{n,4h_{n-1}+3},R_{n-1})\|_{L^\infty_tL^\infty_x} \\ & \label{eq:3}
    \leq 2^n\cdot 2\cdot 2^{-n} \frac{C_1}{2^n}\leq 2C_1,
\end{align}
where \eqref{eq:1} follows by the fact that $X^U(t)$ is measure-preserving, \eqref{eq:2} by the fact that $X^n$ acts only on $Q_{n,4h_n}\cup R_{n-1}$ and \eqref{eq:3} follows by \eqref{eq:first:estimate}. 
To compute the total variation of $b^U$, we observe that, for every $t\in[0,1]$ one has
\begin{equation*}
    \dot{X}^U(t,(X^U)^{-1}(t,x))=2^{n}\dot{X}^n(2^nt-2^n+2,(X^n)^{-1}(2^nt-2^n+2,x)),
\end{equation*}
so that $|\TV(b^U(t)(\T^2)|=2^n|\TV(b^n(t)(\T^2)|$. By using \eqref{eq:second:estimate}, one has that, if $t\in[2^{-n},2^{-n+1}]$
\begin{equation*}
    |\TV(b^U(t)(\T^2)|=2^n|\TV(b^n(t)(\T^2)|\leq 2^n\frac{C_2}{2^{2n}}, 
\end{equation*}
which implies that
\begin{equation*}
    \|\TV(b^U)(\T^2)\|_{L^\infty}\leq C_2.
\end{equation*}
To conclude the proof we have to show that the map $X^U(1)=U$ is the Chacon's map introduced in Section \ref{S:twod}.
More precisely, we have to prove that
\begin{equation*}
    X^1(1)\circ X^2(Q_{2,3},R_1)(1)\circ\dots \circ X^n(Q_{n,4h_{n-1}+3},R_{n-1})(1)=U_n.
\end{equation*}
This easily follows by Proposition \ref{prop:che:palle} observing that the map $X^1(1)=U_1$ and $X^n(Q_{n,4h_{n-1}+3},R_{n-1})(1)=V_n$.
\end{proof}

\printbibliography[title={Bibliography}]
\end{document}